\numberwithin{equation}{section}
\theoremstyle{plain}
\newtheorem{theorem}{Theorem}[section]
\newtheorem{proposition}[theorem]{Proposition}
\newtheorem{lemma}[theorem]{Lemma}
\newtheorem{corollary}[theorem]{Corollary}
\newtheorem{algorithm}[theorem]{Algorithm}
\theoremstyle{definition}
\newtheorem{example}[theorem]{Example}
\newtheorem{remark}[theorem]{Remark}
\let\S\undefined
\DeclareMathOperator{\rank}{rank}
\DeclareMathOperator{\srank}{srank}
\DeclareMathOperator{\sbrank}{sbrank}
\DeclareMathOperator{\Gr}{Gr}
\DeclareMathOperator{\vrank}{vrank}
\DeclareMathOperator{\S}{S}
\DeclareMathOperator{\I}{I}
\DeclareMathOperator{\expdim}{exp.dim}
\newcommand{\mA}{\mathcal{A}}
\newcommand{\mc}{\mathcal}
\newcommand{\re}{\mathbb{R}}
\newcommand{\PP}{\mathbb{P}}
\newcommand{\N}{\mathbb{N}}
\newcommand{\cpx}{\mathbb{C}}
\newcommand{\bbm}{\begin{bmatrix}}
\newcommand{\ebm}{\end{bmatrix}}
\newcommand{\lmd}{\lambda}
\newcommand{\bca}{\begin{cases}}
\newcommand{\eca}{\end{cases}}
\newcommand{\bit}{\begin{itemize}}
\newcommand{\eit}{\end{itemize}}
\newcommand{\af}{\alpha}
\newcommand{\bt}{\beta}
\newcommand{\reff}[1]{(\ref{#1})}
\newcommand{\bea}{\begin{eqnarray}}
\newcommand{\eea}{\end{eqnarray}}
\newcommand{\be}{\begin{equation}}
\newcommand{\ee}{\end{equation}}
\newcommand{\red}[1]{\textcolor{red}{#1}}
\begin{document}

\title[Symmetric Tensor Decompositions On Varieties]
{Symmetric Tensor Decompositions On Varieties}

\author{Jiawang~Nie}
\address{Jiawang Nie, Department of Mathematics,
University of California San Diego,
9500 Gilman Drive, La Jolla, CA, USA, 92093.}
\email{njw@math.ucsd.edu}

\author[Ke Ye]{Ke~Ye}
\address{Ke Ye and Lihong Zhi, KLMM, Academy of Mathematics and Systems Science,
Chinese Academy of Sciences,
Beijing 100190, China}
\email{keyk@amss.ac.cn, lzhi@mmrc.iss.ac.cn}

\author[Lihong Zhi]{Lihong~Zhi}

\begin{abstract}
This paper discusses the problem of symmetric tensor decomposition
on a given variety $X$: decomposing a symmetric tensor
into the sum of tensor powers of vectors contained in $X$.
In this paper, we first study geometric and algebraic properties of such decomposable tensors, which are crucial to the practical computations of such decompositions.  For a given tensor, we also develop a criterion for the existence of a symmetric decomposition on $X$. Secondly and most importantly, we propose a method for computing symmetric tensor decompositions on an arbitrary $X$. As a specific application, Vandermonde decompositions
for nonsymmetric tensors can be computed by the proposed algorithm.
\end{abstract}

\keywords{
symmetric tensor, numerical algorithm, decomposition,
generating polynomial, generating matrix
}

\subjclass[2010]{15A69, 65F99}

\maketitle

\section{Introduction}

Let $n,d>0$ be integers and $\cpx$ be the complex field.
Denote by $\operatorname{T}^d(\cpx^{n+1})$ the space of
$(n+1)$-dimensional complex tensors of order $d$.
For $\mA \in \operatorname{T}^d(\cpx^{n+1})$ and an integral tuple
$(i_1, \ldots, i_d)$, $\mA_{i_1 \dots i_d}$ denotes the
$(i_1, \ldots, i_d)$th entry of $\mA$,  where $0\le i_1, \ldots, i_m \le n$.
The tensor $\mA$ is {\em symmetric} if
\[
\mA_{i_1 \ldots  i_d} = \mA_{j_1 \ldots j_d}
\]
whenever $(j_1, \ldots, j_d)$ is a permutation of $(i_1, \ldots, i_d)$.
Let $\operatorname{S}^d(\mathbb{C}^{n+1})$
be the subspace of all symmetric tensors in $\operatorname{T}^d(\cpx^{n+1})$.
For a vector $u$, denote by $u^{\otimes d}$ the $d$th tensor power of $u$,
i.e., $u^{\otimes d}$ is the tensor such that
$(u^{\otimes d})_{i_1,\dots, i_d} = u_{i_1} \cdots u_{i_d}$.
As shown in \cite{CGLM08}, for each
$\mA \in \operatorname{S}^d (\mathbb{C}^{n+1})$,
there exist vectors $u_1,\ldots, u_r \in \mathbb{C}^{n+1}$ such that
\be \label{A=sum:uid} %\label{A=sum:ui^d}
\mA = (u_1)^{\otimes d} + \cdots + (u_r)^{\otimes d}.
\ee
The above is called a \emph{symmetric tensor decomposition} (STD)
for $\mA$. The smallest such $r$ is called the {\it symmetric rank} of $\mA$,
for which we denote as $\srank(\mA)$.
If $\srank(\mA) =r$, $\mA$ is called a rank-$r$ tensor and
\reff{A=sum:uid} is called a symmetric rank decomposition,
which is also called a {\it Waring decomposition} in some references.
The rank of a generic symmetric tensor is given
by a formula in Alexander-Hirschowitz \cite{AlxHirs95}.
We refer to \cite{CGLM08} for symmetric tensors and their symmetric ranks,
and refer to \cite{Land12,Lim13}
for general tensors and their ranks.

This paper concerns symmetric tensor decompositions on a given set.
Let $X\subseteq \cpx^{n+1}$ be a homogeneous set,
i.e., $tx \in X$ for all $t \in \cpx$ and $x\in X$.
If each $u_j \in X$, \reff{A=sum:uid} is called a
a \emph{symmetric tensor decomposition on $X$} (STDX) for $\mA$.
The STDX problem has been studied in applications for various choices of $X$.
Symmetric tensor decompositions have broad applications
in quantum physics \cite{uemura2012symmetric}, algebraic complexity theory \cite{chiantini2017polynomials,landsberg2006border,strassen1969gaussian,YL2016},
numerical analysis \cite{LRSTA,ye2018tensor}.
More tensor applications can be found in~\cite{KolBad09}.

When $X = \cpx^{n+1}$, the STDX is just the classical
symmetric tensor decomposition,
which has been studied extensively in the literature.
Binary tensor (i.e., $n=1$)
decomposition problems were discussed in \cite{ComSei11}.
For higher dimensional tensors,
the Catalecticant type methods \cite{IarKan99}
are often used when their ranks are low.
For general symmetric tensors,
Brachat et al.~\cite{BRACHAT20101851} proposed a method
by using Hankel (and truncated Hankel) operators.
It is equivalent to computing a new tensor
whose order is higher but the rank is the same as the original one. Oeding and Ottaviani~\cite{oeding2013eigenvectors} proposed to
compute symmetric decompositions by Koszul flattening,
tensor eigenvectors and vector bundles.
Other related work on computing symmetric tensor decompositions
can be found in \cite{BalBer12,BerGimIda11}.
For generic tensors of certain ranks,
the symmetric tensor decomposition is unique.
As shown in~\cite{GalMel},
a generic $\mA \in \operatorname{S}^m(\cpx^{n+1})$
has a unique Waring decomposition if and only if
\[
(n,m,r) \in \big\{ (1,2k-1,k),\, (3,3,5),\, (2,5,7) \big\}.
\]
When $\mA \in \operatorname{S}^m(\cpx^{n+1})$
is a generic tensor of a subgeneric rank $r$
(i.e., $r$ is smaller than the value given by the
Alexander-Hirschowitz formula; see \cite{AlxHirs95,CGLM08}.)
and $m\geq 3$, the Waring decomposition is unique,
with only three exceptions \cite{ChOtVan15}.

When $X=\{(a^n, a^{n-1}b, \ldots, ab^{n-1}, b^n):\, a,b\in \cpx\}
\subseteq \cpx^{n+1}$,
i.e., $X$ is the affine cone of a rational normal curve
in the projective space $\mathbb{P}^n$, the STDX
becomes a Vandermonde decomposition for symmetric tensors.
It only exists for Hankel tensors, which were introduced
in \cite{papy2005exponential} for studying the harmonic retrieval problem.
Hankel tensors were discussed in \cite{qi2015hankel}.
Relations among various ranks of Hankel tensors were studied in \cite{nie2017hankel}.
More applications of Hankel tensors can be found in
\cite{signoretto2011kernel,trickett2013interpolation}.

When $X = \{ a_1 \otimes \cdots \otimes a_k: a_1, \ldots, a_k \in \cpx^m \}
\subseteq \cpx^{n+1}$ with $n+1 = m^k$, i.e., $X$ is a Segre variety,
the STDX becomes a Vandermonde decomposition
for nonsymmetric tensors. This has broad applications in signal processing
\cite{de1998matrix,lim2010multiarray,nion2010tensor,sun2012accurate}.
Vandermonde decompositions for nonsymmetric tensors are closely related to
secant varieties of Segre-Veronese varieties,
which has been studied vastly
\cite{abo2013dimensions,LP2013,raicu2012secant}.
In the subsection~\ref{ssc:VDnst}, we will discuss this question
with more details. Interesting, it can be shown that
every nonsymmetric tensor has a Vandermonde decomposition,
which is different from the symmetric case.

%\smallskip \noindent
%{\bf Contributions}
\subsection*{Contributions}
This paper focuses on computing symmetric tensor decompositions
on a given set $X$. We assume that $X\subseteq \mathbb{C}^{n+1}$
is a variety that is given by homogeneous polynomial equations.
Generally, symmetric tensor decompositions on $X$ can be theoretically studied
by secant varieties of the Veronese embedding of $X$
\cite{landsberg2013equations,sam2017ideals,sam2017syzygies}.
From this view, one may expect to get polynomials
which characterize the symmetric $X$-rank of a given symmetric tensor.
In this paper, we give a method for
computing symmetric $X$-rank decompositions.
It is based on the tool of generating polynomials
that were recently introduced in \cite{nie2017generating}.
The work \cite{nie2017generating} only discussed the case
$X = \cpx^{n+1}$. When $X$ is a variety, i.e.,
the method in \cite{nie2017generating} does not work,
because $u_i \in X$ is required in \reff{A=sum:uid}.
We need to modify the approach in \cite{nie2017generating},
by posing additional conditions on generating polynomials.
For this purpose, we give a unified framework
for computing symmetric tensor decompositions on $X$,
which sheds light on the study of
both theoretical and computational aspects of tensor decompositions.

The paper is organized as follows. Section~\ref{sec:preliminaries}
gives some basics for tensor decompositions.
Section~\ref{Sec:symmetric X-rank} studies some
properties of symmetric tensor decompositons on $X$.
Section~\ref{sec:GPM} defines generating polynomials and generating matrices.
It gives conditions ensuring that the computed vectors
belong to the given set in symmetric tensor decompositions.
Section~\ref{sc:frm} presents a unified framework to do the computation.
Last, Section~\ref{sec:applications} gives
various examples to show how the proposed method works.

\section{Preliminaries}\label{sec:preliminaries}

\noindent
{\bf Notation}
The symbol $\N$ (resp., $\re$, $\cpx$) denotes the set of
nonnegative integers (resp., real, complex numbers).
For $\af :=(\af_1, \ldots, \af_n) \in \N^n$, define
$|\af|: = \af_1 + \cdots + \af_n$.
For a degree $d>0$, denote the index set
\[
\mathbb{N}^n_d = \{ \af :=(\af_1, \ldots, \af_n) \in \N^n \mid
|\af|  \leq d \}.
\]
For a real number $t \in \mathbb{R}$, we denote by $\lceil t \rceil $
the smallest integer $n$ such that $n \ge t$.
Let $x:=(x_0,\dots, x_n)$ and $\cpx[x] := \cpx[x_0,\dots, x_n]$
denote the ring of polynomials in $x$ and with complex coefficients.
For a degree $m$,  $\cpx[x]_m$ denotes the subset
of polynomials whose degrees are less than or equal to $m$,
and $\cpx[x]_m^h$ denotes the subset
of forms whose degrees are equal to $m$.
The cardinality of a finite set $T$ is denoted as $|T|$.
For a finite set $\mathbb{B} \subseteq \cpx[x]$
and a vector $v \in \cpx^n$, denote
\be \label{v:to:*m}
[v]_\mathbb{B}  := \big( p(v) \big)_{p \in \mathbb{B}},
\ee
the vector of polynomials in $\mathbb{B}$ evaluated at $v$.
For a complex matrix $A$, $A^T$ denotes its transpose and $A^*$
denotes its conjugate transpose.
For a complex vector $u$, $\| u \|_2 = \sqrt{u^*u}$
denotes the standard Euclidean norm.
The $e_i$ denotes the standard $i$-th unit vector in $\N^n$.
For two square matrices $X,Y$ of the same dimension,
their commutator is $[X, Y] := XY-YX$.

\subsection{Equivalent descriptions for symmetric tensors}
\label{ssc:equdes}

There is a one-to-one correspondence between
symmetric tensors in $\S^d(\mathbb{C}^{n+1})$
and homogeneous polynomials of degree $d$ and in $(n+1)$ variables.
%A standard labelling for $\mA \in \S^d(\mathbb{C}^{n+1})$ is
%\be  \label{label:af=j}
%\mA = (\mA_{j_1,\dots, j_d}),\quad 0 \le j_1,\dots, j_d \le n.
%\ee
When $\mA \in \S^d(\mathbb{C}^{n+1})$ is symmetric,
we can equivalently use $\alpha = (\alpha_1,\dots,\alpha_n) \in \N_d^n$
to label $\mA$ in the way that
\be \label{newidx:af}
\mA_{\alpha} \coloneqq \mA_{j_1,\dots, j_d},\quad
\mbox{if} \quad x_0^{d-|\af|}x_1^{\alpha_1}\cdots x_n^{\alpha_n}
= x_{j_1} \cdots x_{j_d}.
\ee
The symmetry guarantees that the labelling $\mA_\alpha$ is well-defined.
For $\mA$, define the homogeneous polynomial
(i.e., a form) in $x := (x_0, \dots, x_n)$ and of degree $d$:
\be \label{df:F(x)}
\mA(x) \, := \, {\sum}_{j_1,\ldots, j_d=0}^n
\mA_{j_1,\dots,j_d} x_{j_1}\cdots x_{j_d}.
\ee
If $\mA$ is labeled as in \reff{newidx:af}, then
\[
\mA(x) = {\sum}_{\alpha \in \N_d^n  }
\binom{d}{d-|\af|, \alpha_1,\dots,\alpha_n}
\mA_{\alpha} x_0^{d-|\af|} x_1^{\af_1} \cdots x_n^{\af_n}
\]
where
$
\binom{d}{\alpha_0,\dots,\alpha_n} \coloneqq \frac{d!}{\alpha_0 ! \cdots \alpha_n!}.
$
The decomposition \reff{A=sum:uid} is equivalent to
\[
\mA(x) = {\sum}_{j=1}^r  \big( (u_j)_0 x_0 + \cdots + (u_j)_n x_n \big)^d.
\]
For a polynomial $ p = \sum_{ \af \in \N^n_d} p_\af
y_1^{\af_1} \cdots y_n^{\af_n}$ and $\mA \in \S^d(\cpx^{n+1})$,
define the operation
\be \label{<p,mA>}
\langle p, \mA \rangle = {\sum}_{ \af \in \N^n_d} p_\af \mA_{\af}
\ee
where $\mA$ is labeled as in \reff{newidx:af}.
For fixed $p$, $\langle p, \cdot \rangle$
is a linear functional on $\S^d(\cpx^{n+1})$,
while for fixed $\mA \in \S^d(\cpx^{n+1})$,
$\langle \cdot, \mA \rangle$ is a linear functional on
$\cpx[y_1,\ldots, y_n]_d$.

\subsection{Algebraic varieties}

A set $I \subseteq \cpx[x]:=\mathbb{C}[x_0,\dots, x_n]$
is an ideal if $I \cdot \cpx[x] \subseteq I$
and $I+I \subseteq I$. For polynomials
$f_1,\dots, f_s\in \mathbb{C}[x]$, let $\left\langle f_1,\dots,f_s \right\rangle $
denote the smallest ideal that contains $f_1,\ldots, f_s$.
%Let $(x_0,\dots, x_n)$ be the coordinate vector for $\mathbb{C}^{n+1}$.
A subset $X \subseteq \mathbb{C}^{n+1}$ is called an \emph{affine variety} if
$X$ is the set of common zeros of some polynomials in $\cpx[x]$.
The \emph{vanishing ideal} of $X$ is the ideal consisting of
all polynomials identically vanish on $X$.

Two nonzero vectors in $\mathbb{C}^{n+1}$ are \emph{equivalent}
if they are parallel to each other.
Denote by $[u]$ the set of all nonzero vectors that
are equivalent to $u$; the set $[u]$ is called the equivalent class of $u$.
The set of all equivalent classes $[u]$ with $0\ne u \in \cpx^{n+1}$
is the projective space $\PP^n$, or equivalently,
$\PP^n = \{ [u]: \, 0 \ne u \in \cpx^{n+1} \}$.
%
%We denote by $\mathbb{P}^n$ the quotient space
%$(\mathbb{C}^{n+1}\setminus \{0\} )/\sim $ where
%$(a_0,\dots,a_n) \sim (b_0,\dots, b_n)$ if and only if there exists
%$\lambda\in \mathbb{C}\setminus \{0\}$ such that
%$a_j = \lambda b_j,j=0,\dots, n$. We call $\mathbb{P}^n$
%the $n$-dimensional projective space. We also denote by $[x]$
%the equivalence class represented by $x\in \mathbb{C}^{n+1}\setminus \{0\}$.
%
A subset $Z \subseteq \mathbb{P}^n$ is said to be a \emph{projective variety}
if there are homogeneous polynomials $h_1, \ldots, h_t \in \cpx[x]$ such that
\[
Z = \{ [u] \in \PP^n:\,
h_1(u) = \cdots = h_t(u) = 0 \}.
\]
The \emph{vanishing ideal} $\mc{I}(Z)$ is defined to be the ideal
consisting of all polynomials identically vanish on $Z$.
For each nonnegative integer $m$, we denote by $\mc{I}_m (Z)$
the linear subspace of polynomials of degree $m$ in $\mc{I}(Z)$.
A projective variety $Z \subseteq \mathbb{P}^n$ is said to be
\emph{nondegenerate} if $Z$ is not contained in any proper
linear subspace of $\mathbb{P}^n$, i.e., $\mc{I}_1(Z) = \{0\}$.

In the Zariski topology for $\cpx^{n+1}$ and $\PP^n$,
the closed sets are varieties and the open sets
are complements of varieties.
For a projective variety $Z \subseteq \mathbb{P}^n$,
its Hilbert function $h_Z:\mathbb{N} \to \mathbb{N}$ is defined as
$
h_Z(d) \, \coloneqq \, \dim \mathbb{C}[Z]_d.
$
As in \cite{CLO,Harris1992,Hartshorne1977}, when
$d$ is sufficiently large, $h_Z(d)$ is a polynomial and
\[
h_Z(d) = \frac{e}{m!} d^m + O(d^{m-1}),
\quad e = \deg(Z),\quad m = \dim(Z),
\]
where $O( d^{m-1})$ denotes terms of order at most $m-1$.

For an affine variety $X \subseteq \cpx^{n+1}$,
we denote by $\PP X$ the projective set of
equivalent classes of nonzero vectors in $X$, i.e.,
$
\PP X = \{ [u]: \, 0 \ne u \in X \}.
$
If $\PP X = Z$, then $X$ is called the
\emph{affine cone} of $Z$.
Clearly, the vanishing ideal of $X$ is the same as that of $\PP X$,
i.e., $\mathcal{I}(X) = \mathcal{I}(\PP X)$.
For a degree $m>0$, the set
\[
\mc{I}_m(X) := \cpx[x]^h_m \cap  \mc{I}(X)
\]
is the space of all forms of degree $m$ vanishing on $X$.

\subsection*{Veronese maps}
For an affine variety $X \subseteq \mathbb{C}^{n+1}$, let
$v_d(X) \subseteq \S^d(\mathbb{C}^{n+1})$ be the image of $X$
under the \emph{Veronese map}:
\[
v_d: \mathbb{C}^{n+1}  \to  \S^d (\mathbb{C}^{n+1}),\quad
u \mapsto u^{\otimes d}.
\]
Note that $v_d^{-1 } (u^{\otimes d})  = \{\omega^i u:i=0,\dots,d-1\}$,
where $\omega$ is a primitive $d$-th root of $1$.
Therefore, the dimension of $v_d(X)$ is the same as that of $X$.
In particular, for
\[
C := \{(x_0, x_1, \ldots, x_n) \in \cpx^{n+1}:
x_i x_j = x_k x_l, \, \forall \, i+j = k +l \},
\]
the set $v_d(C)$ is a variety of tensors
$\mA \in \operatorname{S}^d(\mathbb{C}^2)$
that is defined by the equations
\[
\mA_\alpha \mA_\beta - \mA_\gamma \mA_\tau=0 \,\quad
(\alpha+\beta = \gamma + \tau,\,
\alpha,\beta,\gamma,\tau\in \mathbb{N}_{d}^2).
\]
In the above, the tensors in $\operatorname{S}^d(\mathbb{C}^2)$
are labelled by vectors in $\mathbb{N}_{d}^2$.
The projectivization $\PP v_d(C)$ is called the rational normal curve
in the projective space $\mathbb{P} \operatorname{S}^d(\mathbb{C}^2)\simeq \PP^d$.
For a projective variety $Z \subseteq \PP^n$,
the Veronese embedding map $v_d$ is defined in the same way as
\[
v_d: \PP^n \to  \PP \S^d (\mathbb{C}^{n+1}),\quad
[u] \mapsto [u^{\otimes d}].
\]
Note that $\PP v_d(X) = v_d( \PP X )$
for every affine variety $X$.

\subsection*{Segre varieties}
For projective spaces $\mathbb{P}^{n_1},\ldots,\mathbb{P}^{n_k}$,
their \emph{Segre product}, denoted as
$\operatorname{Seg}(\mathbb{P}^{n_1} \times \cdots \times \mathbb{P}^{n_k})$,
is the image of the Segre map:
\[
\operatorname{Seg}: \, \quad
%\mathbb{P}^{n_1} \times \cdots \times \mathbb{P}^{n_k}
%\to \mathbb{P} \bigotimes_{j=1}^k \mathbb{C}^{n_j+1},
([u_1], \ldots, [u_k]) \mapsto
[ u_1 \otimes \cdots \otimes u_k ].
\]
The dimension of $\operatorname{Seg}(\mathbb{P}^{n_1}
\times \cdots \times \mathbb{P}^{n_k})$
is the sum $n_1+\cdots+n_k$.
%
%We denote by $\operatorname{Seg}(\mathbb{P}^{n_1} \times \cdots \times \mathbb{P}^{n_k})$
%the $\sum_{j=1}^k n_j$-dimensional subvariety of $\mathbb{P}(\bigotimes_{j=1}^k \mathbb{C}^{n_j+1})$, consisting of all decomposable tensors
%$[u_1 \otimes \cdots \otimes u_{k}],u_j\in \mathbb{C}^{n_j+1},j=1,\dots,k$.
%
The Segre product $\mathbb{P}^{n_1},\dots,\mathbb{P}^{n_k}$
is defined by equations of the form
\[
\mA_\alpha \mA_\beta - \mA_\gamma \mA_\gamma=0 \,,\quad
(\alpha + \beta  = \gamma + \tau,\,
\alpha,\beta,\gamma,\tau\in {\prod}_{j=1}^k \{0,\dots, n_j\}
).
\]
Here, tensors in $\mathbb{P}(\bigotimes_{j=1}^k \mathbb{C}^{n_j+1})$
are labelled by integral tuples in $\prod_{j=1}^k \{0,\dots, n_j\}$.
%For example, coordinates on $\mathbb{P}(\mathbb{C}^2 \otimes \mathbb{C}^2)$
%are $x_{0,0},x_{1,0},x_{0,1},x_{1,1}$ and
%$\operatorname{Seg}(\mathbb{P}^1 \times \mathbb{P}^1)\subset \mathbb{P}^3$
%is defined by
%\[
%x_{0,0} x_{1,1} - x_{1,0} x_{0,1} = 0.
%\]

%%%%%%%%%%%%%%%%%%%%%%%%%%%%%%%%%%%%%
\iffalse

\subsection{Grassmannian variety}
We denote by $\Gr(k,n)$ the Grassmannian variety consisting of all
$k$-dimensional linear subspaces of $\mathbb{C}^n$.
We have a Pl\"{u}cker embedding
\[
\Gr(k,n) \to \mathbb{P} \bigwedge^k \mathbb{C}^n,\quad W\mapsto [w_1\wedge \cdots w_k],
\]
where $\{w_1,\dots, w_k\}$ is a basis of $W$
and the dimension of $\Gr(k,n)$ is $k(n-k)$. In particular, we have
$\Gr(1,n) = \mathbb{P}^{n-1}$ and $\Gr(k,n) \simeq \Gr(n-k,n)$.

\fi
%%%%%%%%%%%%%%%%%%%%%%%%%%%%%%%%%%%%

\subsection*{Secant varieties}
\label{ssc:secvar}

Let $X \subseteq \cpx^{n+1}$ be an affine variety
and let $v_d(X)$ be its image under the $d$-th Veronese map $v_d$.
Define the set
%\[
%\sigma^{\circ}_r(v_d(X)) \coloneqq \bigcup_{u_1,\dots, u_r \in X}   \mathbb{P}^{r-1}_{v_d(x_1),\dots, v_d(x_{r}) }
%\]
\[
\sigma^{\circ}_r(v_d(X)) \, := \,
\big\{ (u_1)^{\otimes d} + \cdots + (u_r)^{\otimes d}:
u_1,\dots, u_r \in X  \big \}.
\]
%
%where $x_1,\dots, x_r$ are distinct points in $X$ and
%$\mathbb{P}^{r-1}_{v_d(x_1),\dots, v_d(x_{r})}$
%is the plane of dimension $(r-1)$ spanned by $v_d(x_1),\dots, v_d(x_{r})$.
%
The Zariski closure of $\sigma_r^\circ (v_d(X))$,
which we denote as $\sigma_r(v_d(X)) \subseteq \mathbb{C}^{n+1}$,
is called the \emph{$r$th secant variety} of $v_d(X)$.
The closure $\sigma_r(v_d(X))$ is an affine variety in
$\operatorname{S}^d(\mathbb{C}^{n+1})$,
while $\sigma_r^\circ (v_d(X))$ is usually not.
However, it holds that
\[
\dim \sigma_r^\circ (v_d(X)) = \dim \sigma_r (v_d(X)),
\]
because $\sigma^\circ_r (v_d(X))$ is a dense subset of
$\sigma_r(v_d(X))$ in the Zariski topology.
When $v_d(X)$ is replaced by a general variety $Y$,
the sets $\sigma_r^{\circ}(Y)$ and $\sigma_r(Y)$
can be defined in the same way.
We refer to \cite{Land12} for secant varieties.

\section{Properties of STDX}
\label{Sec:symmetric X-rank}

Let $X \subseteq \cpx^{n+1}$ be a set that is given by homogeneous polynomial equations.
For a given tensor $\mA \in \S^d(\mathbb{C}^{n+1})$,
a \emph{symmetric $X$-decomposition on $X$} is
\begin{equation}
\label{eqn:symmetric $X$-rank decomposition}
\mA = (u_1)^{\otimes d} + \cdots + (u_r)^{\otimes d}, \quad
u_1, \ldots, u_r \in X.
\end{equation}
The smallest such $r$ is called the \emph{symmetric $X$-rank} of $\mA$,
which we denote as $\srank_X(\mA) $, or equivalently,
\be \label{def:srankX(A)}
\srank_X(\mA) = \min \{ r: \,
\mA = \sum_{i=1}^r (u_i)^{\otimes d}, \, u_i \in \cpx^X \}.
\ee
When $r$ is the smallest,
\reff{eqn:symmetric $X$-rank decomposition}
is called a \emph{rank-retaining symmetric $X$-decomposition} for $\mA$.
It is possible that the decomposition
\reff{eqn:symmetric $X$-rank decomposition} does not exist;
for such a case, we define $\srank_X(\mA) = + \infty$.
For instance, a symmetric tensor $\mA$ admits a Vandermonde decomposition
if and only if $\mA$ is a Hankel tensor. So, if $\mA$ is not Hankel,
then $\srank_X(\mA) = + \infty$.
Interested readers are referred to \cite{nie2017hankel,qi2015hankel} for more details.
We denote by $\operatorname{S}^d(X)$ the subspace of tensors
which admit a symmetric $X$-decomposition as in
\reff{eqn:symmetric $X$-rank decomposition}.
As a counterpart for symmetric border rank,
the \emph{symmetric border $X$-rank} of $\mA$ is similarly defined as
\be \label{def:sbrankX(A)}
\sbrank_X(\mA) \, \coloneqq \, \min \{r: \mA \in \sigma_r(v_d(X)) \},
\ee
where $\sigma_r(v_d(X))$ is the secant variety of $v_d (X)$,
defined in Subsection~\ref{ssc:secvar}.
When $\PP X$ is an irreducible variety,
$\sbrank_X(\mA)$ is also equal to the smallest integer $r$
such that $\mA$ is the limit of a sequence of tensors
whose symmetric $X$-rank is $r$
(see \cite[Sec.~5.1.1]{Land12} or \cite[Theorem~2.33]{Mum95}).
The \emph{generic symmetric $X$-rank} of $\S^d(X)$
is the smallest $r$ such that $\sigma_r(v_d(X)) = \S^d(X)$.
When $X = \cpx^{n+1}$, the symmetric $X$-rank becomes
the usual symmetric rank (or Waring rank).
If $\PP X = v_d(\mathbb{P}^1)$ is the rational normal curve,
the symmetric $X$-rank becomes the Vandermonde rank for Hankel tensors \cite{nie2017hankel,qi2015hankel}.
How to characterize tensors that has a
symmetric $X$-decomposition as in
\eqref{eqn:symmetric $X$-rank decomposition}?
How to tell $\srank_X(\mA) < + \infty$
or $\srank_X(\mA) = + \infty$? These questions are the focus of this section.
\subsection{Existence of symmetric $X$-decompositions}
Let $\PP X \subseteq \PP^n$ be the projectivization of $X$. Its vanishing ideal is $\mathcal{I}(X)$, the ideal of polynomials that are identically zero on $X$. The section of degree-$d$ forms in $\mathcal{I}(X)$ is
\[
\mathcal{I}_d(X) \coloneqq \mc{I}(X) \cap \mathbb{C}[x]^h_d.
\]
Let $c := \dim \mathcal{I}_d(X)$. Choose a basis
$
\{f_1, \ldots, f_c\}
$
for $\mc{I}_d(X)$. Let $l_1,\dots, l_c$ be linear functionals
on $\operatorname{S}^d(\mathbb{C}^{n+1})$ such that
\be \label{eqn:pullback}
l_i \Big( {\sum}_j (u_j)^{\otimes d} \Big) =
{\sum}_j f_i(u_j) .
\ee
They are linearly independent functions on $\operatorname{S}^d(X)$.
If we use $\hat{f}_i$ to denote the dehomogenization of $f_i$,
i.e., $\hat{f}_i(x_1,\ldots, x_n) := f_i(1,x_1,\ldots,x_n)$, then
$
l_i (\mA) = \langle \hat{f}_i, \mA \rangle
$
for all $\mA \in \operatorname{S}^d(\mathbb{C}^{n+1})$.
See \reff{<p,mA>} for the definition of the operation $\langle \cdot, \cdot \rangle$.
\begin{proposition}\label{prop:codim}
Let $X,c,f_i, l_i$ be as above. Then, a tensor
$\mA \in \operatorname{S}^d(\mathbb{C}^{n+1})$
belongs to $\operatorname{S}^d(X)$ if and only if
$l_1(\mA) = \cdots = l_c(\mA) = 0$.
Consequently, the codimension of $\operatorname{S}^d(X)$ is $c$, i.e.,
$
\dim \operatorname{S}^d(X) = \binom{n+d}{d} - c.
$
\end{proposition}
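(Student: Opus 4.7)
The plan is to exploit the nondegenerate pairing $\langle\cdot,\cdot\rangle$ between $\S^d(\mathbb{C}^{n+1})$ and $\mathbb{C}[x]^h_d$ (identified with $\mathbb{C}[y_1,\ldots,y_n]_d$ via dehomogenization), so that the problem reduces to a standard annihilator/double-annihilator argument in linear algebra.

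First I would dispatch the forward implication directly. If $\mA = \sum_j (u_j)^{\otimes d}$ with each $u_j \in X$, then by \eqref{eqn:pullback}, $l_i(\mA) = \sum_j f_i(u_j) = 0$ because every $f_i$ lies in $\mc{I}_d(X)$. Next I would record the fact that $\operatorname{S}^d(X)$ is actually a \emph{linear} subspace of $\operatorname{S}^d(\mathbb{C}^{n+1})$: closure under addition is by concatenating decompositions, and closure under scalar multiplication uses homogeneity of $X$, namely for $\lambda \in \mathbb{C}$ choose $t$ with $t^d = \lambda$ and observe $\lambda\, u^{\otimes d} = (tu)^{\otimes d}$ with $tu\in X$. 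Thus $\operatorname{S}^d(X) = \operatorname{span}\{u^{\otimes d} : u \in X\}$.

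For the converse, I would invoke the duality between $\operatorname{S}^d(\mathbb{C}^{n+1})$ and $\mathbb{C}[x]^h_d$ afforded by $\langle\cdot,\cdot\rangle$. The key computation is that $\langle \hat f, u^{\otimes d}\rangle = f(u)$ for every $f \in \mathbb{C}[x]^h_d$ and $u \in \mathbb{C}^{n+1}$; this is exactly the content of \eqref{eqn:pullback} evaluated on a single rank-one tensor. Consequently, a form $f \in \mathbb{C}[x]^h_d$ annihilates $\operatorname{S}^d(X) = \operatorname{span}\{u^{\otimes d}:u\in X\}$ under the pairing if and only if $f(u)=0$ for every $u\in X$, i.e., if and only if $f \in \mc{I}_d(X)$. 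Hence the annihilator of $\operatorname{S}^d(X)$ inside $\mathbb{C}[x]^h_d$ is precisely $\mc{I}_d(X)$, which has dimension $c$.

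Finally I would close the loop with the double-annihilator formula, valid because $\operatorname{S}^d(X)$ is a linear subspace of the finite-dimensional space $\operatorname{S}^d(\mathbb{C}^{n+1})$: $\operatorname{S}^d(X)$ equals the annihilator of its own annihilator, which, spelled out through the basis $\{f_1,\ldots,f_c\}$ of $\mc{I}_d(X)$, is exactly $\{\mA : l_1(\mA) = \cdots = l_c(\mA) = 0\}$. The dimension count then reads $\dim \operatorname{S}^d(X) = \dim\operatorname{S}^d(\mathbb{C}^{n+1}) - c = \binom{n+d}{d} - c$. The only mildly delicate point, and the one I would be most careful to state explicitly, is the verification that $\operatorname{S}^d(X)$ is closed under scalar multiplication, since this is what legitimizes both the identification $\operatorname{S}^d(X) = \operatorname{span}\{u^{\otimes d}:u\in X\}$ and the use of annihilator duality.
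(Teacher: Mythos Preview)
Your proof is correct and follows essentially the same annihilator/double-annihilator argument as the paper: both identify the linear functionals vanishing on $\operatorname{S}^d(X)$ with $\mc{I}_d(X)$ via $l(u^{\otimes d})=f(u)$, and then recover $\operatorname{S}^d(X)$ as the common zero locus of $l_1,\ldots,l_c$. The only difference is that you explicitly verify $\operatorname{S}^d(X)$ is a linear subspace (using the homogeneity of $X$ for scalar closure), whereas the paper tacitly assumes this when it asserts that $\mA\in\operatorname{S}^d(X)$ follows from every linear functional vanishing on $\operatorname{S}^d(X)$ also vanishing on $\mA$; your extra care here is warranted.
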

\begin{proof}
Clearly, if $\mA \in \operatorname{S}^d(X)$,
then $l_1(\mA) = \cdots = l_c(\mA) = 0$.
Next, we prove the converse is also true.
Suppose $\mA \in \operatorname{S}^d(\mathbb{C}^{n+1})$
is such that $l_1(\mA) = \cdots = l_c(\mA) = 0$.
To show $\mA \in \operatorname{S}^d(X)$,
it is enough to show that:
if $l$ is a linear function vanishing on
$\operatorname{S}^d(X)$ then $l(\mA)=0$.
Each $l_i$ vanishes on $v_d(X)$ and $l_1,\dots, l_c$
are linearly independent as vectors in
$\operatorname{S}^d(\mathbb{C}^{n+1})^\ast$.
(The superscript $^\ast$ denotes the dual space.)
Note that $l \in \operatorname{S}^d(\mathbb{C}^{n+1})^\ast$
and it vanishes on $v_d(X)$.
So, there is a form $f \in \cpx[x]_d$ such that
$
l( u^{\otimes d} ) = f(u)
$
for all  $u \in \cpx^{n+1}$.
Since $l \equiv 0$ on $v_d(X)$, $f$ also vanishes on $X$.
So, $f$ is a linear combination of $f_1,\dots, f_c$,
and hence $l$ is a linear combination of $l_1,\dots, l_c$.
This implies that $l(\mA) = 0$ and $\mA \in \operatorname{S}^d(X)$.

Since $l_1,\dots, l_c$ are linearly independent, the subspace
$\operatorname{S}^d(X)$ are defined by $c$
linearly independent linear equations.
So its codimension is $c$. Since the dimension of
$\operatorname{S}^d(\mathbb{C}^{n+1})$ is $\binom{n+d}{d}$,
the dimension of $\operatorname{S}^d(X)$
follows from the codimension.
\end{proof}
The first part of Proposition~\ref{prop:codim}
is a high dimensional analogue of the apolarity lemma,
which can be found in \cite[Theorem~5.3]{IarKan99}, \cite[Section~1.3]{RS2000},
and \cite[Section~3]{Teitler2015}). For convenience of referencing,
we state this result here and give a straightforward proof.
We would like to thank Zach Teitler for pointing out the relationship between
Proposition~\ref{prop:codim} and the apolarity Lemma.

By Proposition~\ref{prop:codim}, we get the following algorithm
for checking if $\mA \in \operatorname{S}^d(X)$ or not.
Suppose the vanishing ideal $\mathcal{I}(X)$ is generated by the forms
$f_1,\dots, f_s$, with degrees $d_1\le \dots \le d_s$ respectively.

\begin{algorithm}
\label{alg:algorithm1}
For a given tensor $\mA \in \operatorname{S}^d(\mathbb{C}^{n+1})$,
do the following:
\bit

\item [Step~1:] Find the integer $k \geq 0$ such that
$d_{k} \le d < d_{k+1}.$\footnote{Here we adopt the convention that
$d_0 = 0$ and $d_{s+1} = \infty$.}

\item [Step~2:] For each $t = 1,\dots, k$ and
$\beta\in \mathbb{N}^{n+1}_{d- d_{t}}$, let
$
f_{t,\beta} := f_t \cdot x_0^{\beta_0} \cdots x_n^{\beta_n}.
$

\item [Step~3:] Check whether or not $\langle f_{t,\beta}, \mA \rangle = 0$
for all $t$ and $\beta$ in Step~2. If it is,
then $\mA \in \operatorname{S}^d(X)$;
otherwise, $\mA  \not\in \operatorname{S}^d(X)$.

\eit

\end{algorithm}

The above algorithm can be easily applied to detect tensors in
$\operatorname{S}^d(X)$. For instance,
if $X$ is defined by linear equations
$\sum_{j=0}^n f_{ij} x_j = 0$ ($i=1,\ldots,c$), then
$\mA \in \operatorname{S}^d(X)$ if and only if for all
$1 \leq i \leq c$ and $0 \leq k_2, \ldots, k_m \leq n$,
\[
\sum_{j=0}^n f_{ij} \mA_{jk_2\ldots k_m} = 0 .
\]
If $X$ is a hypersurface defined by
a single homogeneous polynomial $f(x) = 0$ with $\deg(f) \leq m$, then
$\mA \in \operatorname{S}^d(X)$ if and only if
\[
\langle f \cdot x^\af,   \mA \rangle = 0
\]
for all monomials $x^\af$ with $\deg(f)+|\af| = m$.

When does $\operatorname{S}^d(X) = \operatorname{S}^d(\mathbb{C}^{n+1})$,
i.e., when does every tensor admit a symmetric $X$-decomposition?
By Proposition~\ref{prop:codim},
we get the following characterization.

\begin{corollary}
\label{corollary:fill ambient space}
Let $X \subseteq \cpx^{n+1}$ be as above.
Then, the equality $\operatorname{S}^d(X) =
\operatorname{S}^d(\mathbb{C}^{n+1})$ holds if and only if
$\mathcal{I}_d(X) = \{0\}$, which is equivalent to that
$X$ is not contained in any hypersurface of degree $d$.
\end{corollary}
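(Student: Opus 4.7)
The plan is to deduce this corollary directly from Proposition~\ref{prop:codim}, which already did the heavy lifting. First I would observe that by Proposition~\ref{prop:codim}, the codimension of $\operatorname{S}^d(X)$ inside $\operatorname{S}^d(\mathbb{C}^{n+1})$ equals $c = \dim \mathcal{I}_d(X)$. Since $\operatorname{S}^d(X)$ is a linear subspace of the ambient symmetric tensor space, the equality $\operatorname{S}^d(X) = \operatorname{S}^d(\mathbb{C}^{n+1})$ holds if and only if this codimension vanishes, i.e., $c = 0$, which is precisely $\mathcal{I}_d(X) = \{0\}$.

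Next I would translate the condition $\mathcal{I}_d(X) = \{0\}$ into the geometric statement about hypersurfaces. By definition, $\mathcal{I}_d(X)$ is the space of degree-$d$ forms vanishing identically on $X$. A nonzero form $f \in \mathcal{I}_d(X)$ is exactly the defining equation of a degree-$d$ hypersurface $\{f = 0\} \subseteq \mathbb{C}^{n+1}$ (or, projectively, in $\mathbb{P}^n$) that contains $X$. Therefore $\mathcal{I}_d(X) = \{0\}$ is equivalent to the nonexistence of any such hypersurface, i.e., $X$ is not contained in any hypersurface of degree $d$.

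Combining these two equivalences yields the corollary. There is no real obstacle here, since everything follows from Proposition~\ref{prop:codim} and the tautological identification between degree-$d$ forms vanishing on $X$ and degree-$d$ hypersurfaces containing $X$; the only minor point worth making explicit is that $\operatorname{S}^d(X)$, being cut out by the finitely many linear equations $l_1 = \cdots = l_c = 0$, really is a linear subspace, so a codimension-zero subspace of $\operatorname{S}^d(\mathbb{C}^{n+1})$ coincides with the whole ambient space.
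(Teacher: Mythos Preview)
Your proposal is correct and matches the paper's approach exactly: the paper states the corollary immediately after Proposition~\ref{prop:codim} with only the remark ``By Proposition~\ref{prop:codim}, we get the following characterization,'' and your argument simply spells out that deduction (codimension $c=0$ iff $\mathcal{I}_d(X)=\{0\}$, together with the tautological correspondence between degree-$d$ forms vanishing on $X$ and degree-$d$ hypersurfaces containing $X$).
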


The above corollary implies that if $X$ is a hypersurface of degree
bigger than $d$, then every tensor in $\operatorname{S}^d(\mathbb{C}^{n+1})$
has a symmetric $X$-decomposition. Moreover, if $X = \mathbb{C}^{n+1}$,
then obviously we have $\mathcal{I}_d(X) = \{0\}$ for any $d\ge 1$,
which implies the well known fact \cite{CGLM08} that every symmetric tensor admits a symmetric decomposition.

\subsection{The dimension and expected rank}

By Proposition \ref{prop:codim},
$
\dim \operatorname{S}^d(X) = h_{\PP X}(d),
$
where $h_{\PP X}(\cdot)$ is the Hilbert function
for the projective variety $\PP X$.
For the Veronese map $v_d$, we have $\dim v_d (X) = \dim X$.
Therefore, the expected dimension of the secant variety
$\sigma_r (v_d(X))$:
\[
\expdim \sigma_r (v_d(X)) = \min \left\lbrace r \dim X,
h_{\PP X}(d) \right\rbrace.
\]
The \emph{expected generic symmetric $X$-rank}
of $\operatorname{S}^d(X)$ is therefore
\begin{equation}\label{eqn: exp.grank}
\operatorname{exp.grank} =
\left\lceil {h_{\PP X}(d)}/{\dim X} \right\rceil.
\end{equation}

\begin{example}\label{example:segre and hypersurface}
(i) If $\PP X =\operatorname{Seg}(\PP^{n_1} \times \cdots \times \PP^{n_k})$,
the Segre variety, then $\dim X = n_1+\cdots +n_k+1$.
Its Hilbert function
$h_{\PP X}(d)  = \prod_{j=1}^k \binom{n_j + d}{n_j}$
\cite[Example 18.15]{Harris1992}. So
\[
\dim \operatorname{S}^d(X) = \prod_{j=1}^k \binom{n_j + d}{n_j},\quad
\operatorname{exp.grank} = \left\lceil
\frac{\prod_{j=1}^k \binom{n_j+d}{n_j}}{\sum_{j=1}^k n_j +1}
\right \rceil.
\]
(ii) If $\PP X \subseteq \mathbb{P}^n$ is a hypersurface defined
by a form of degree $t$, the Hilbert function of $\PP X$ is
\[
h_{\PP X}(d) =\begin{cases}
\binom{n+d}{n},~\text{if}~d<t, \\
\binom{n+d}{n} - \binom{n+d-t}{n},~\text{otherwise}.
\end{cases}
\]
Then, $\dim \operatorname{S}^d(X) = h_{\PP{X}}(d)$ and
the $\operatorname{exp.grank}$ can be obtained accordingly.
%\[
%\operatorname{exp.grank} = \begin{cases}
%\lceil \frac{\binom{n+d}{n}}{n} \rceil,~\text{if}~d <t,\\
%\lceil \frac{\binom{n+d}{n} - \binom{n+d-t}{n}}{n}\rceil,~\text{otherwise}.
%\end{cases}
%\]
\end{example}

When $\PP X$ is a curve,
we can get the dimension of $\sigma_r(v_d(X))$ as follows.

\begin{proposition}\label{prop:curve}
If $\PP X$ is a non-degenerate curve
(i.e., $\dim \PP X=1$ and $\PP X$ is not contained in any
proper linear subspace of $\PP^{n}$), then
\[
\dim \sigma_r (v_d(X)) = \min
\left\lbrace 2r, h_{\PP X}(d) \right\rbrace.
\]
Therefore, the generic
symmetric $X$-rank is $\lceil \frac{h_X(d) }{2} \rceil$.
Moreover, if $\PP X$ is nonsingular of genus $g$ and degree $t$,
then there exists an integer $d_0$ such that for all $d\ge d_0$,
\begin{align*}
\dim \sigma_r(v_d(X)) =\min\{2r, dt - g+1 \},\quad
\operatorname{grank}  = \lceil \frac{dt - g +1 }{2} \rceil.
\end{align*}
\end{proposition}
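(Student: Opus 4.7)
The plan is to reduce the dimension computation to the classical non-defectivity of secant varieties of non-degenerate projective curves, and then apply Riemann--Roch for the smooth case. Since $\sigma_r(v_d(X))$ is the affine cone over $\sigma_r(v_d(\PP X)) \subseteq \PP\operatorname{S}^d(X)$, and the ambient projective space has dimension $h_{\PP X}(d) - 1$ by Proposition~\ref{prop:codim}, the claim is equivalent to asserting that $\sigma_r(v_d(\PP X))$ has the expected projective dimension $\min\{2r-1,\, h_{\PP X}(d) - 1\}$.

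The first step is to observe that $v_d(\PP X) \subseteq \PP\operatorname{S}^d(X)$ is an irreducible non-degenerate curve. Irreducibility and one-dimensionality transfer across the Veronese embedding, and non-degeneracy is immediate from Proposition~\ref{prop:codim}: by definition, $\operatorname{S}^d(X)$ is the linear span of $v_d(X)$ in $\operatorname{S}^d(\cpx^{n+1})$, so the linear span of $v_d(\PP X)$ is the whole of $\PP\operatorname{S}^d(X)$.

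Next I apply Terracini's lemma: for general $p_1, \ldots, p_r \in v_d(\PP X)$, the projective tangent space to $\sigma_r(v_d(\PP X))$ at a general point of $\langle p_1, \ldots, p_r \rangle$ equals $\langle T_{p_1}, \ldots, T_{p_r} \rangle$. The problem thus reduces to showing that $r$ general tangent lines to a non-degenerate irreducible curve in $\PP^N$ span a linear subspace of the maximal dimension $\min\{2r-1, N\}$. This is the classical non-defectivity theorem for secant varieties of non-degenerate curves, and it is the main obstacle in the argument: the proof proceeds by induction on $r$, where at each step, writing $L$ for the span of $T_{p_1}, \ldots, T_{p_{r-1}}$ (of dimension $2r-3$ by the inductive hypothesis), one must show that a generic tangent line $T_{p_r}$ is disjoint from $L$, ruling out the degenerate configuration in which every tangent line meets $L$ --- an argument that crucially uses non-degeneracy. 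Once this is established, the dimension of the join $\langle L, T_{p_r}\rangle$ is $2r-1$, completing the induction and giving the generic symmetric $X$-rank $\lceil h_{\PP X}(d)/2 \rceil$.

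For the second assertion, assume $\PP X$ is smooth of genus $g$ and degree $t$. Then $\mathcal{O}_{\PP X}(d) := \mathcal{O}_{\PP^n}(d)|_{\PP X}$ is a line bundle of degree $dt$. By Serre vanishing there exists $d_0$ such that for all $d \geq d_0$, $h^1(\PP X, \mathcal{O}_{\PP X}(d)) = 0$ and the restriction map $\cpx[x]^h_d \to H^0(\PP X, \mathcal{O}_{\PP X}(d))$ is surjective. Riemann--Roch then yields
\[
h_{\PP X}(d) \, = \, h^0(\PP X, \mathcal{O}_{\PP X}(d)) \, = \, dt - g + 1,
\]
and substitution into the first part gives both the dimension of $\sigma_r(v_d(X))$ and the generic rank $\lceil (dt - g + 1)/2 \rceil$.
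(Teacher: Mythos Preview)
Your proposal is correct and follows essentially the same route as the paper: the paper simply cites \cite[Example~11.30]{Harris1992} for the non-defectivity of secant varieties of non-degenerate curves and the Riemann--Roch theorem~\cite[Chapter~4]{Hartshorne1977} for the ``moreover'' part, while you have unpacked these references --- reducing to the projective span via Proposition~\ref{prop:codim}, invoking Terracini's lemma and the inductive tangent-line argument, and then Serre vanishing plus Riemann--Roch to identify $h_{\PP X}(d)$ with $dt-g+1$ for large $d$. The substance is the same; your write-up simply makes explicit what the paper leaves to the citations.
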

\begin{proof}
The first part follows directly from \cite[Example 11.30]{Harris1992}.
The ``moreover" part follows from
Riemann-Roch theorem~\cite[Chapter 4]{Hartshorne1977}.
\end{proof}

By \cite[Example 13.7]{Harris1992}, if
$\PP X$ is a space curve of degree $e$, i.e., $\PP X$ is a curve in $\mathbb{P}^3$ which intersects a generic plane in $e$ points,
then $d_0 = e-2$ in Proposition~\ref{prop:curve}. For an arbitrary curve $\PP X$, however, not much is known about $d_0$.
The Hilbert functions are also known for Veronese varieties,
Grassmann varieties and flag varieties, see \cite{Harris1992,GN2011}. Hence the expected value of the generic rank for them may be calculated by \eqref{eqn: exp.grank}.

%%%%%%%%%%%%%%%%%%%%%%%%%%%%%%%%%%%%%%%%
\iffalse

\subsection{the symmetric $X$-rank decomposition problem}
\label{subsection:SXRDP} Given a symmetric tensor
$\mA \in \S^d(\mathbb{C}^{n+1})$ and a projective variety
$X\subseteq \mathbb{P}^n$, we may consider the symmetric
$X$-rank decomposition problem (SXRDP):
find the smallest positive integer $r$ and $l_1,\dots, l_r\in \hat{X}$ such that
\[
\mA  = l_1^{\otimes d}  + \cdots + l_r^{\otimes d}.
\]
The SXRDP may also be regarded as an optimization problem:
\begin{align*}
&\min \quad \lVert \mA - \sum_{j=1}^r l_j^{\otimes d} \rVert^2\\
& \operatorname{s.t.}\quad l_1,\dots, l_r\in \hat{X}.
\end{align*}
If $X = \mathbb{P}^n$ and $r=1$, then the SXRDP
is the symmetric rank one tensor approximation problem considerted in
\cite{nie2014semidefinite} and for $r\ge 2$,
then the SXRDP is the low symmetric rank tensor
approximation problem considered in \cite{nie2017generating}.

\fi
%%%%%%%%%%%%%%%%%%%%%%%%%%%%%%%%%%

\subsection{Vandermonde decompositions for nonsymmetric tensors}
\label{ssc:VDnst}

A nonsymmetric tensor $\mA \in (\mathbb{C}^{d+1})^{\otimes k}$
is said to admit a \emph{Vandermonde decomposition}
if there exist vectors ($s=1,\ldots, k$, $j=1,\ldots, r$)
\[
v_s^{(j)} := \Big(a_{sj})^{d}, (a_{sj})^{d-1}b_{sj},
\ldots, a_{sj} (b_{sj})^{d-1}, (b_{sj})^{d} \Big)
\in \mathbb{C}^{d+1}
\]
such that
\[
\mA = {\sum}_{j=1}^r v_1^{(j)} \otimes \cdots \otimes v_k^{(j)}.
\]
The smallest integer $r$ in the above
is called the \emph{Vandermonde rank} of $\mA$,
which we denote as $\vrank(\mA)$. Since
$
v_s^{(j)} = (a_{sj},b_{sj})^{\otimes d}\in \S^{d}
\mathbb{C}^2 \simeq \mathbb{C}^{d+1},
$
we can rewrite \eqref{eqn:Vanderdecomp} equivalently as
\be\label{eqn:Vanderdecomp}
\mA
= {\sum}_{j=1}^r \Big((a_{1j},b_{1j})
\otimes \cdots \otimes (a_{kj},b_{kj}) \Big)^{\otimes d}.
\ee
The Vandermonde decomposition can be thought of as a symmetric
tensor decomposition on the set $X\subseteq \mathbb{C}^{2^k}$ such that
\[
\PP X \coloneqq  \mathbb{P}^1 \times \cdots \times \mathbb{P}^1
\quad \mbox{\, ($k$ times)}.
\]
The variety $\sigma_r(v_d(X) ) \subseteq \mathbb{P} \S^d(\mathbb{C}^{2^k})$
is exactly the Zariski closure of tensors
whose Vandermonde ranks at most $r$.
The vanishing ideal of the Segre variety
$\mathbb{P}^{n_1} \times \cdots \times \mathbb{P}^{n_k}$
is generated by $2\times 2$ minors of its flattenings
\cite{grone1977decomposable}. So
$\mathcal{I}_d(\mathbb{P}^{n_1} \times \cdots \times \mathbb{P}^{n_k})
\ne 0$ for all $d\ge 2$. By Corollary~\ref{corollary:fill ambient space},
$\operatorname{S}^d(X)$ is a proper subspace of $\S^d(\mathbb{C}^{2^k})$.
However, every $\mA \in (\mathbb{C}^{d+1})^{\otimes k}$
has a Vandermonde decomposition.

\begin{theorem}
\label{thm:HDnst}
Every tensor in $(\mathbb{C}^{d+1})^{\otimes k}$
has a Vandermonde decomposition.
\end{theorem}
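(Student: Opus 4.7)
The plan is to reduce the theorem to two observations: every tensor in $(\cpx^{d+1})^{\otimes k}$ is a finite sum of rank-one tensors, and every vector in $\cpx^{d+1}$ is a finite sum of Vandermonde vectors. Combining these by multilinearity of the tensor product will yield the desired decomposition.

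First, I would write $\mA = \sum_i u_1^{(i)} \otimes \cdots \otimes u_k^{(i)}$ as a finite sum of rank-one tensors with $u_s^{(i)} \in \cpx^{d+1}$; this is immediate by expanding $\mA$ in the standard basis of $(\cpx^{d+1})^{\otimes k}$. By multilinearity of the tensor product, it then suffices to exhibit a Vandermonde decomposition for a single rank-one tensor $u_1 \otimes \cdots \otimes u_k$.

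Next, I would show that every $u \in \cpx^{d+1}$ is a sum of finitely many Vandermonde vectors. The Vandermonde vectors
\[
\big\{ (a^d, a^{d-1}b, \ldots, b^d) : (a,b) \in \cpx^2 \big\}
\]
form the affine cone over the rational normal curve in $\PP^d$, which is non-degenerate, so they linearly span $\cpx^{d+1}$. Moreover, any scalar multiple of a Vandermonde vector is itself Vandermonde: for any $c \in \cpx$ with a $d$-th root $c^{1/d}$, one has $c \cdot (a^d, a^{d-1}b, \ldots, b^d) = \big((c^{1/d}a)^d, (c^{1/d}a)^{d-1}(c^{1/d}b), \ldots, (c^{1/d}b)^d\big)$. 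Hence every linear combination of Vandermonde vectors can be rewritten as a sum of Vandermonde vectors. Equivalently, under the natural identification $\cpx^{d+1} \simeq \S^d(\cpx^2)$, this is the classical Sylvester--Waring decomposition of a binary form of degree $d$ into $d$-th powers of linear forms, which always exists. This step is precisely where the hypothesis that the field is $\cpx$ enters, via the existence of $d$-th roots.

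Finally, for each factor $u_s$ of the rank-one tensor $u_1 \otimes \cdots \otimes u_k$, I would write $u_s = \sum_{j_s=1}^{r_s} v_s^{(j_s)}$ with each $v_s^{(j_s)}$ Vandermonde, and then expand
\[
u_1 \otimes \cdots \otimes u_k = \sum_{j_1,\ldots, j_k} v_1^{(j_1)} \otimes \cdots \otimes v_k^{(j_k)},
\]
obtaining a Vandermonde decomposition. Summing over all rank-one tensors in the original decomposition of $\mA$ completes the argument. No step here is a genuine obstacle; the whole proof is formal once the observation about scalar multiples of Vandermonde vectors (equivalently, binary Waring decomposition) is in hand. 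It is worth contrasting this with the symmetric situation: a generic symmetric tensor in $\S^d(\cpx^{2^k})$ has no symmetric Vandermonde decomposition because $\mc{I}_d(X) \neq 0$ (cf.\ Corollary~\ref{corollary:fill ambient space}), whereas the nonsymmetric tensor-product structure absorbs precisely this obstruction.
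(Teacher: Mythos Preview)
Your proposal is correct and follows essentially the same strategy as the paper: decompose $\mA$ into rank-one tensors, write each factor as a linear combination of Vandermonde vectors (the paper does this by fixing a specific Vandermonde basis $v_l=(1,t_l,\dots,t_l^d)$ for distinct $t_l$, while you invoke non-degeneracy of the rational normal curve), and expand by multilinearity. In fact you are slightly more explicit than the paper about the last step, namely absorbing the remaining scalar coefficients into the Vandermonde factors via $d$-th roots; the paper's proof stops at $\mA=\sum c_{j_1,\dots,j_k}\, v_{j_1}\otimes\cdots\otimes v_{j_k}$ and leaves this absorption tacit.
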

\begin{proof}
Each $ \mA \in (\mathbb{C}^{d+1})^{\otimes k}$ admits
a general tensor decomposition, say,
\[
\mA = {\sum}_{j=1}^L u_{j,1} \otimes \cdots \otimes u_{j,k}
\]
for vectors $u_{j,i} \in \cpx^{d+1}$.
Choose distinct numbers $t_0, t_1, \ldots, t_d$ and let
\[
v_l := (1, t_l, t_l^2, \ldots, t_l^d),
\]
for $l=0,1,\ldots,d$. Clearly,
$v_0, v_1, \ldots, v_d$ are linearly independent
and they span $\cpx^{d+1}$. For each $u_{j,i}$,
there exists numbers $\lmd_{j,i,0}, \ldots, \lmd_{j,i,d}$ such that
\[
u_{j,i} = \lmd_{j,i,0} v_0 + \cdots + \lmd_{j,i,d} v_l.
\]
Plugging the above expression of $u_{j,i}$
in the decomposition for $\mA$, we get
\[
\mA = {\sum}_{j_1, \ldots, j_k=0}^l
c_{j_1, \ldots, j_k} v_{j_1} \otimes \cdots \otimes v_{j_k},
\]
for some scalars $c_{j_1, \ldots, j_k}$.
\end{proof}

\section{Generating polynomials}
\label{sec:GPM}

Assume that $X \subseteq \cpx^{n+1}$
is a set defined by homogeneous polynomial equations.
For a given tensor $\mA\in \S^d(\mathbb{C}^{n+1})$
with $\srank_X(\mA) = r$,
we discuss how to compute the symmetric $X$-decomposition
\be \label{eq:F=sum:ui:tpd}
\mA = (u_1)^{\otimes d} + \cdots + (u_r)^{\otimes d}, \quad
u_1,\dots, u_r \in X.
\ee
Suppose $X$ is given as
\be \label{def:X}
X =\{ x \in \cpx^{n+1}: \,
h_i(x) =  0, i=1,\ldots, N
\},
\ee
with homogeneous polynomials $h_i$ in $x:= (x_0, x_1, \ldots, x_n)$.
Denote $y := (y_1, \ldots, y_n)$. The dehomogenization of $X$ is the set
\be \label{set:Y}
Y =\{ y \in \cpx^n: \,
h_i(1, y_1, \ldots, y_n) =  0, i=1,\ldots, N
\}.
\ee
%
%Let $f(y)$ be the de-homogenization of the form
%$\mA(x_0, x_1, \ldots, x_n)$,
%which is defined as in \reff{df:F(x)}, i.e.,
%\[
%f(y) = \mA(1, y_1, \ldots, y_n).
%\]
%
If each $(u_j)_0 \ne 0$, then the decomposition~\reff{eq:F=sum:ui:tpd}
is equivalent to that
\be \label{eqn:Y-decomp}
\mA = \lmd_1 (1, v_1)^{\otimes d} + \cdots + \lmd_r(1, v_r)^{\otimes d},
\ee
for scalars $\lambda_j\in \mathbb{C}$ and vectors $v_j \in Y$.
In fact, they are
\[
\lambda_j = ( (u_j)_0 )^d, \quad
v_j = \frac{1}{ (u_j)_0 } \big((u_j)_1,\dots, (u_j)_n \big)^\mathsf{T}.
\]
The assumption that $(u_j)_0 \ne 0$ is generic.
For the rare case that it is zero,
we can apply a generic coordinate change for $\mA$
so that this assumption holds.
Throughout this section, we assume the rank $r$ is given.

\subsection{Generating polynomials}

Denote the quotient ring $C[Y]:=\mathbb{C}[y]/ \I(Y)$,
where $I(Y)$ is the vanishing ideal of $Y$.
The $C[Y]$ is also called the coordinate ring of $Y$.
We list monomials in $\mathbb{C}[y]$ with respect to the graded lexicographic order, i.e.,
%
%\footnote{We can use any other monomial ordering here.},
%%
%%% we should delte this footnote, because later we need to use this ordering
%%% othersise, it may cause confusion
%
\[ 1 < y_1 <\cdots < y_n < y_1^2 < y_1y_2 < \cdots .\]
We choose $B_0$ to be the set of first $r$ monomials,
whose images in $\mathbb{C}[Y]$ are linearly independent, say,
\[
B_0 = \{ y^{\beta_1},\dots, y^{\beta_r}\}.
\]
The border set of $B_0$ is
$
B_1  := B_0 \cup y_1B_0 \cup \cdots \cup y_n B_0 .
$
The \emph{boundary} of $B_1$ is
\[
\partial B_1  :=
\big(  B_0 \cup y_1B_0 \cup \cdots \cup y_n B_0 \big) \setminus B_0.
\]
For a matrix
$G \in \cpx^{ r \times \lvert \partial B_1 \rvert }$, we label it as
\[
G = (c_{i,\alpha})_{i\in [r], \alpha\in \partial B_1}.
\]
For each $\alpha\in \partial B_1$, denote the polynomial in $y$
\be \label{phiG:af}
\varphi[G,\alpha] :=
{\sum}_{i=1}^r c_{i,\alpha} y^{\beta_i} - y^{\alpha} .
\ee
Denote the tuple of all such polynomials as
\be \label{set:phiG}
\varphi[G] \, := \, \big( \varphi[G,\alpha](y): \alpha\in \partial B_1 \big).
\ee
Let $J_G$ be the ideal generated by $\varphi[G]$.
The set $\varphi[G]$ is called a
\emph{border basis} of $J_G$ with respect to $B_0$.
We refer to \cite{Stetter2004,Kehrein2005} for border sets and border bases.

In the following, we give the definition of generating polynomials
which were introduced in \cite{nie2017generating}.
For $\alpha\in \partial B_1$, the polynomial $\varphi[G,\alpha]$
is called a \emph{generating polynomial} for $\mA\in \S^d(\mathbb{C}^{n+1})$ if
\be \label{eqn:generatingmatrix}
\langle y^{\gamma} \varphi[G,\alpha] , \mA \rangle = 0 \quad
\forall \, \gamma\in \mathbb{N}^n_{d-\lvert \alpha \rvert}.
\ee
(See \reff{<p,mA>} for the operation $\langle, \rangle$.)
If $\varphi[G,\alpha]$ is a generating polynomial
for all $\alpha\in \partial B_1$,
then $G$ is called a generating matrix for $\mA$.
The set of all generating matrices for $\mA$ is denoted as
$\mathcal{G}(\mA)$. The condition~\eqref{eqn:generatingmatrix}
is equivalent to that
\be \label{eqn:generatingmatrix1}
{\sum}_{i=1}^r   c_{i,\alpha} \mA_{\beta_i+\gamma} =
\mA_{\alpha + \gamma}.
\ee
We use $G(:,\alpha)$ to denote the $\alpha$th column of $G$.
Then \reff{eqn:generatingmatrix1} can be rewritten as
\[
A[\mA,\alpha] G(:,\alpha) = b[\mA,\alpha]
\]
where $A[\mA,\alpha], b[\mA,\alpha]$ are given as
\[
\big( A[\mA,\alpha] )_{\gamma,\beta} = \mA_{\beta + \gamma},\quad
\big( b[\mA,\alpha] )_\gamma = \mA_{\alpha + \gamma} \quad
\big( \beta\in B_0,\,
\gamma \in \mathbb{N}^n_{d - \lvert \alpha \rvert} \big).
\]
The solutions to \eqref{eqn:generatingmatrix1}
can be parameterized as $c_\alpha + N_\alpha w_\alpha$,
where $c_\alpha$ is a solution to \eqref{eqn:generatingmatrix1},
$N_\alpha$ is a basis matrix for the nullspace,
and $w_\alpha$ is the vector of free parameters.
So, every generating matrix can be parameterized as
\be \label{eqn:generatingmatrix2}
G(w) = C + N(w),
\ee
where $C$ is a constant matrix and
$N(w) \in \cpx^{[r] \times \partial B_1 }$.
The following is an example of parameterizing $G(w)$.

\begin{example}\label{ex:smallexample1}
Consider the tensor $\mA \in \S^3(\mathbb{C}^4)$ such that
\begin{multline*}
\mA(1,y_1,y_2,y_3) \, = \,
32\,{y_{{3}}}^{3}-12\,{y_{{3}}}^{2}y_{{2}}+126\,{y_{{3}}}^{2}y_{{1}}-
48\,y_{{3}}{y_{{2}}}^{2}+36\,y_{{3}}y_{{2}}y_{{1}} \\
+150\,y_{{3}}{y_{{1} }}^{2}
-20\,{y_{{2}}}^{3}-18\,{y_{{2}}}^{2}y_{{1}} +42\,y_{{2}}{y_{{1}}}
^{2}+51\,{y_{{1}}}^{3}+18\,{y_{{3}}}^{2}-36\,y_{{3}}y_{{2}} \\
+84\,y_{{3}} y_{{1}}-30\,{y_{{2}}}^{2}+12\,y_{{2}}y_{{1}}
+45\,{y_{{1}}}^{2}+6\,y_{{3}}-6\,y_{{2}}+9\,y_{{1}}-1.
\end{multline*}
For $r=3$, if we choose
$B_0 = \lbrace 1, y_1,y_2  \rbrace$,
%\quad \beta_1 = (0,0,0),\quad \beta_2 = (1,0,0),\quad \beta_3 = (0,1,0).
then
\[
\partial B_1 = \lbrace y_3,y_1^2,y_1y_2,y_2^2,y_1y_3,y_2y_3 \rbrace,
\quad \lvert \partial B_1 \rvert = 6,
\]
\begin{align*}
\alpha_1 &= (0,0,1),\quad \alpha_2 =(2,0,0),\quad \alpha_3 = (1,1,0), \\
\alpha_4 &= (0,2,0),\quad \alpha_5=(1,0,1),\quad \alpha_6 = (0,1,1).
\end{align*}
The $A[\mA,\alpha_i]$ and $b[\mA,\alpha_i]$
can be formulated accordingly. For instance,
\[
A[\mA,\alpha_1] =
\arraycolsep=2.5pt
\kbordermatrix{
& (0,0,0) & (1,0,0) & (0,1,0)  \\
(0,0,0) & -1 & 3 & -2 \\
(1,0,0) & 3 & 15  & 2   \\
(0,1,0) & -2 & 2  & -10  \\
(0,0,1) & 2 & 14  & -6  \\
(2,0,0) & 15  & 51  & 14  \\
(1,1,0) & 2  & 14  & -6  \\
(0,2,0) & -10  & -6  & -20  \\
(1,0,1) & 14  & 50  & 6  \\
(0,1,1) & -6  & 6  & -16  \\
(0,0,2) & 6  & 42  & -4  \\
},\quad b[\mA,\alpha_1] = \arraycolsep=2.5pt
\kbordermatrix{
& ~  \\
(0,0,0) & 2  \\
(1,0,0) &   14  \\
(0,1,0) & -6 \\
(0,0,1) &6  \\
(2,0,0) & 50    \\
(1,1,0) & 6    \\
(0,2,0) & -16    \\
(1,0,1) & 42   \\
(0,1,1) & -4   \\
(0,0,2) & 32   \\
}.
\]
The generating matrix is uniquely determined, i.e.,
\[
G(w) = \left[ \begin {array}{cccccc} -1&-3&-1&{\frac{83}{20}}&-4&{\frac{63}{
20}}\\ \noalign{\medskip}1&4&1&-{\frac{27}{20}}&4&-{\frac{7}{20}}
\\ \noalign{\medskip}1&0&1&{\frac{9}{10}}&1&{\frac{9}{10}}\end {array}
 \right] .
\]
For $r=4$, if we choose
$
B_0 = \lbrace 1, y_1 ,y_2, y_3 \rbrace,
$
then
$
\partial B_1 =\lbrace y_1^2, y_1y_2, y_2^2, y_1y_3, y_2y_3,y_3^2\rbrace.
$
The generating matrix has $6$ parameters, i.e.,
\[
G(w) =\begin{bmatrix}
-3 + w_1 & -1 + w_2 & 83/20 + w_3 &  -4 + w_4 &  63/20 + w_5 & 3/20 + w_6 \\
4 - w_1 & 1 - w_2 & -27/20 - w_3 &  4 - w_4 &  -7/20 - w_5 & 53/20 - w_6 \\
 - w_1 & 1 - w_2 & 9/10 - w_3 &  1 - w_4 &  9/10 - w_5 & 9/10 - w_6 \\
-3 + w_1 &  w_2 & w_3 &   w_4 &   w_5 & w_6 \\
\end{bmatrix}.
\]

%%%%%%%%%%%%%%%%
\iffalse

and $N(w)$ is the $3\times 6$ matrix whose first column is zero, and the $i$-th column is given by
\[
\begin{bmatrix}
3w_{i1} + 2w_{i2} &-w_{i1} & -w_{i2}
\end{bmatrix}^\mathsf{T},\quad 2\le i \le 6.
\]
Here $w_{ik}$ is a parameter for $2\le i \le 6, 1\le k \le 2$. More precisely, we have
\[
\tiny
N(w) = \begin{bmatrix}
-1 & 9 + 3w_{21} + 2w_{22}  & 4 + 3w_{31} + 2w_{32} &10 + 3w_{41} + 2w_{42} & 1 + 3w_{51} + 2w_{52} & 3 + 3w_{61} + 2w_{62}\\
1 & -w_{21}  & -w_{31}  & -w_{41}  & -w_{51} & -w_{61} \\
1 & -w_{22} &  -w_{32} & -w_{42} & -w_{52} & -w_{62}
\end{bmatrix}.
\]
\fi
%%%%%%%%%%%%%%%%%%%%%%%%%%%%%
\end{example}

\subsection{Polynomial division by $\varphi[G]$}
From now on, suppose the vanishing ideal
\[
\I(Y) = \left\langle g_1,\dots, g_N \right\rangle,
\]
generated by $g_1,\dots, g_N \in \cpx[y]$. For $p \in \cpx[y]$,
denote by $\mbox{NF}(p; G)$ the normal form of $p$
with respect to $\varphi[G]$, i.e., $\mbox{NF}(p; G)$
is the remainder of $p$ divided by $\varphi[G]$,
obtained by the Border Division Algorithm~\cite[Proposition 6.4.11]{KL2005}. Here we use the
%{\bf (What is monominal ordering for the polynomial division here?
%The remainder may not be unique if it is not a Grobner basis.)}
Formally, $\mbox{NF}(p;G)$ is the polynomial such that
$p - \mbox{NF}(p;G) \in J_G$, the ideal generated by polynomials in $\varphi[G]$.
Note that $\mbox{NF}(p; G)$ is a polynomial in $y:=(y_1, \ldots, y_n)$
whose coefficients are parametrized by the entries of $G$.

\begin{proposition}\label{proposition:normalform}
Suppose $B_0$ is the set of first $r$ monomials\footnote{We remark that instead of the first $r$ monomials, one may choose any $r$ monomials which are connected to one. Interested readers are referred to \cite{Mourrain1999} for a detailed discussion. For simplicity, we use in this paper the set of first $r$ monomials, which is obviously connected to one. This choice of basis will be convenient in practical computations like those in Section~\ref{sec:applications}.
%{\bf However, Monique Laurent (a generalized flat extension theorem) pointed out that
%the basis should be connected to one, otherwise the conclusion is not true.
%Should we insist that the basis is connected to one?}
}
$y^{\beta_1},\dots,y^{\beta_r}$ in the graded lexicographic ordering
such that their images in
$\cpx[y]/\I(Y)$ are linearly independent.
Then, a polynomial $p \in \cpx[y]$ belongs to
$J_G$ if and only if $\mbox{NF}(p;G) = 0$.
\end{proposition}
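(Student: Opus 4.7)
The plan is to prove the equivalence in two directions, treating the forward and reverse implications separately. The reverse implication is essentially definitional: if $\mbox{NF}(p;G) = 0$, then by the Border Division Algorithm we have $p - \mbox{NF}(p;G) \in J_G$, so $p \in J_G$ immediately. The nontrivial content lies in the forward implication.

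For the forward direction, suppose $p \in J_G$. Since $p - \mbox{NF}(p;G) \in J_G$ by the output property of the Border Division Algorithm, subtracting gives $\mbox{NF}(p;G) \in J_G$ as well. By construction $\mbox{NF}(p;G)$ is a $\mathbb{C}$-linear combination of monomials in $B_0 = \{y^{\beta_1}, \ldots, y^{\beta_r}\}$, so the claim reduces to showing that no nonzero element of $\mathbb{C}\langle B_0 \rangle$ can lie in $J_G$; equivalently, that the images of $y^{\beta_1}, \ldots, y^{\beta_r}$ in $\cpx[y]/J_G$ are linearly independent. To get the upper bound $\dim_{\cpx} \cpx[y]/J_G \le r$, I would argue that the generating relations $y^\alpha \equiv \sum_i c_{i,\alpha} y^{\beta_i} \pmod{J_G}$ for $\alpha \in \partial B_1$, combined with induction on the graded lexicographic order, let every monomial reduce modulo $J_G$ to a combination of monomials in $B_0$; the hypothesis that $B_0$ is the initial segment of the monomial order is exactly what makes this downward induction valid.

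To upgrade this span statement to linear independence, I would invoke the hypothesis that the images of $y^{\beta_1}, \ldots, y^{\beta_r}$ are independent modulo $\mathcal{I}(Y)$, interpreted inside the tensor decomposition framework of the section: the generating matrix setup implicitly contemplates $r$ vectors $v_1, \ldots, v_r \in Y$ from an $X$-decomposition of the underlying tensor, so the points $v_j$ lie in $V(J_G)$ and evaluation at these points produces a surjection $\cpx[y]/J_G \twoheadrightarrow \cpx^r$, yielding $\dim \cpx[y]/J_G \ge r$. Combined with the upper bound, we get equality, so the kernel of the surjection $\cpx\langle B_0\rangle \twoheadrightarrow \cpx[y]/J_G$ is trivial, which is exactly $J_G \cap \cpx\langle B_0\rangle = \{0\}$, forcing $\mbox{NF}(p;G) = 0$. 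The main obstacle is the lower bound $\dim \cpx[y]/J_G \ge r$: abstractly, for an arbitrary border prebasis, this would require commutativity of the multiplication matrices determined by $G$, but in the tensor setting one can exhibit the $r$ decomposition points directly, provided the context supplies them.
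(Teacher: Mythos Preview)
Your reverse implication and the reduction of the forward implication to showing $J_G \cap \cpx\langle B_0 \rangle = \{0\}$ are both correct, and your upper bound $\dim_{\cpx}\cpx[y]/J_G \le r$ via the Border Division Algorithm is fine. The genuine gap is in your lower bound $\dim_{\cpx} \cpx[y]/J_G \ge r$. You try to manufacture $r$ points in $V(J_G)$ by reaching into the surrounding tensor-decomposition context, but the proposition is stated for an \emph{arbitrary} matrix $G \in \cpx^{[r] \times \partial B_1}$: there is no hypothesis that $G$ is a generating matrix for some tensor $\mA$, that any decomposition points $v_j$ exist, or that the multiplication matrices $M_i(G)$ commute. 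Indeed, Theorem~\ref{thm:commuting} immediately afterward shows that the existence of $r$ common zeros of $\varphi[G]$ is \emph{equivalent} to the commutativity conditions \eqref{eqn:commutativity}, which are not assumed here. You flag this yourself in your last sentence, but importing unassumed context does not close the gap; the hypothesis on $\mathcal{I}(Y)$ says nothing about $J_G$ unless one already knows $\mathcal{I}(Y)\subseteq J_G$, which again is part of the conclusion of Theorem~\ref{thm:commuting}, not an input here.

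The paper's proof takes a different route that avoids dimension-counting and evaluation at points altogether. It exploits the hypothesis that $B_0$ consists of the \emph{first} $r$ monomials in the graded lexicographic order: this makes $B_0$ an order ideal whose complement is generated, as a monomial ideal, by finitely many ``corner'' monomials, all lying in $\partial B_1$. Citing standard border-basis results (Stetter; Kehrein--Kreuzer--Robbiano), the paper asserts that the polynomials $\varphi[G,\alpha]$ indexed by corners form a Gr\"{o}bner basis $S\subseteq \varphi[G]$ of $J_G$; since each such $\varphi[G,\alpha]$ has leading monomial $y^\alpha$ and the border-division remainder coincides with the Gr\"{o}bner normal form with respect to $S$, the criterion $p\in J_G \iff \mbox{NF}(p;G)=0$ follows from the standard Gr\"{o}bner-basis membership test. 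The structural input is the term-order compatibility of $B_0$, not the existence of zeros of $\varphi[G]$.
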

%(\red{**In the above, we do not need $B_0$ to be the set of FIRST
%$r$ monomials in the graded lexicographic ordering?
%Anyone with linearly independent $r$ monomials are okay, right?
%Is this true?}\blue{I think we do need $B_0$ to be the set of first $r$ monomials which are linearly independent, otherwise, the result we cite below can not be applied.})
%(\orange{ ****Although we may not need to choose the first $r$ linearly independent  monomials, the set $B_0$ should be connected to $1$: if
%$m\in B_0$, then either $m=1$, or there exists $i \in [1, n]$ and $m' \in B_0$ such that $m=x_i m'$.  I am not sure whether  the first $r$ linearly independent monomials are always connected to $1$? If not, then we may add this condition.}
%{\bf I got contradictive info.
%In the footnote, we remark that the connectedness to one is not required?
%But Mourrain, Monique Laurent insisted the connectedness to one,
%in their statements of theorems.
%Is there a contradiction?
%From what we wrote, I cannot conclude that
%if the condition of connectedness to one is really necessary or not?}
%)

\begin{proof}
By the construction, $\varphi[G]$ is a border basis of $J_G$,
%({\bf I guess we do not need Proposition~\ref{prop:basis} in the proof.
%By the construction, $\varphi[G]$ is always a border basis of $J_G$,
%isn't this true?})
so it contains a Gr\"{o}bner basis (say, $S$) of $J_G$ with respect to the graded lexicographic ordering.
Indeed, those elements in $G$ which are associated to the corner of $B_0$ form a Gr\"{o}bner basis.
See \cite[Proposition 2.30]{Stetter2004} or \cite[Proposition 4.3.9]{Kehrein2005}
for definition of the corner of $B_0$ and more details of the proof.
%({\bf I checked Stetter's book.
%He did not explicitly conclude that it containts a Grobner basis.
%How can we write down the rigorous proof for Grobner basis?
%I do not follow completely here.})
Therefore, $p\in J_G$ if and only if $\mbox{NF}(p;G) = 0$.
Since $S \subseteq \varphi[G]$, the normal form of $p$
with respect to $S$ is the same as the normal form with respect to
$\varphi[G]$, which is $\mbox{NF}(y;G)$.
Therefore, $p \in J_G$ if and only if
$\mbox{NF}(p;G) = 0$.
\end{proof}

%The normal form $\mbox{NF}(p;G)$ can be obtained
%by a sequence of polynomial divisions. The division of
%$p(y)$ by $q(y)$ is given by
%\[
%\widetilde{p}(y) \coloneqq p(y) - c_\af \frac{y^\alpha}{\operatorname{lm}(q(y))} q(y)
%\]
%where $\operatorname{lm}(q(y))$ is the leading monomial\footnote{
%We assume that $\operatorname{lm}(q(y))$ divides $y^\alpha$.
%Otherwise, $p(y)$ is not reducible by $q(y)$.}
%and $y^\alpha$ is the leading monomial in $p(y)$
%whose coefficient is $c_\alpha$.

The condition $\mbox{NF}(p;G) = 0$ is equivalent to that
its coefficients are zeros identically.
The coefficients of $\mbox{NF}(p;G)$ are polynomials in $c_{i,\af}$.
Their degrees can be bounded as follows.
For a degree $k > 1$, define the set
$B_k$ recursively as
\[
B_k \, := \, B_{k-1} \cup y_1 B_{k-1} \cup \cdots \cup y_n B_{k-1}.
\]
The monomials in $B_k$ generate a subspace,
which we denote as $\mbox{Span}\,B_k$.

\begin{lemma}\label{lemma:degreebound}
Let $B_k, p(y)$ and $\mbox{NF}(p;G)$ be as above.
Write $p=p_1 + p_2$, where
$p_1 \in \cup_{i \geq 1} \mbox{Span} \, B_i$
and $p_2$ is a polynomial whose monomials are not contained in any $B_i$.
If $p_1 \in \mbox{Span} \, B_k$, then the coefficients of
$\mbox{NF}(p;G)$ are polynomials of degree at most $k$ in $G$.
\end{lemma}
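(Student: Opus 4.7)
The plan is to reduce the claim to a degree estimate for individual monomials and then extend by linearity. The central observation is that each generating polynomial $\varphi[G,\alpha] = \sum_{i=1}^r c_{i,\alpha} y^{\beta_i} - y^{\alpha}$ directly encodes a single border-reduction rule that is linear in $G$: for $\alpha \in \partial B_1$,
\[
\mbox{NF}(y^\alpha; G) = {\sum}_{i=1}^r c_{i,\alpha} y^{\beta_i},
\]
whose coefficients have degree one in the entries of $G$. By construction of $B_0 = \{y^{\beta_1},\dots, y^{\beta_r}\}$, every normal form is a $\mathbb{C}[G]$-linear combination of the $y^{\beta_i}$, so the entire question reduces to bounding the degrees of these coefficients.

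The main step is an induction on $k$ proving that, for each monomial $y^\alpha \in B_k$,
\[
\mbox{NF}(y^\alpha; G) = {\sum}_{i=1}^r q_{i,\alpha}(G)\, y^{\beta_i}
\]
with $\deg q_{i,\alpha} \leq k$ in the entries of $G$. The base case $k=0$ is immediate, since elements of $B_0$ are their own normal forms. For the inductive step, use the recursion $B_k = B_{k-1} \cup y_1 B_{k-1} \cup \cdots \cup y_n B_{k-1}$ to write $y^\alpha = y_j y^{\alpha'}$ with $y^{\alpha'} \in B_{k-1}$. The inductive hypothesis gives $\mbox{NF}(y^{\alpha'}; G) = \sum_i q_{i,\alpha'}(G)\, y^{\beta_i}$ with $\deg q_{i,\alpha'} \leq k-1$, and then each $y_j y^{\beta_i} \in B_1$ is reduced by its degree-one-in-$G$ normal form. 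Since the border-reduction step acts as a $\mathbb{C}[G]$-linear substitution, the two passes compose by \emph{adding} degrees rather than multiplying, yielding the bound $(k-1) + 1 = k$.

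The lemma follows by $\mathbb{C}$-linearity of $\mbox{NF}(\cdot; G)$: any $p_1 \in \mbox{Span}\, B_k$ is a fixed scalar combination of monomials in $B_k$, so its normal form has coefficients of degree at most $k$ in $G$. The component $p_2$ contributes nothing in the setting of the lemma, because the choice $1 \in B_0$ forces every monomial $y^\gamma$ to lie in $B_{|\gamma|}$, so $p_2 = 0$ automatically.

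The one point requiring care is the bookkeeping in the inductive step: one must verify that the two stages (applying the hypothesis to $y^{\alpha'}$ and then reducing each $y_j y^{\beta_i}$) combine additively in the degree in $G$, rather than multiplicatively. This is transparent once one views each border reduction as an affine substitution $y^\alpha \mapsto \sum_i c_{i,\alpha} y^{\beta_i}$, but it is the one place where the argument could be mis-stated. Otherwise the proof is a clean induction packaged entirely by the structure of the border basis $\varphi[G]$.
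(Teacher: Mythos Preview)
Your argument is correct and follows essentially the same induction on $k$ as the paper, using the recursion $B_k = B_{k-1} \cup \bigcup_i y_i B_{k-1}$ and the two-stage reduction $\mbox{NF}\bigl(y_j\cdot \mbox{NF}(y^{\alpha'};G);G\bigr)$ to add at most one degree in $G$ per level. Your extra remark that $p_2 = 0$ automatically (since $1 \in B_0$ forces every monomial $y^\gamma$ into $B_{|\gamma|}$) is correct and slightly sharpens the paper's treatment, which simply records that $p_2$ is unreduced and hence contributes degree~$0$ in $G$.
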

\begin{proof}
Note that
$\mbox{NF}(p;G) = \mbox{NF}(p_1;G) + p_2$,
because $p_2$ is not reducible by $\varphi[G]$.
The coefficients of $p_2$ do not depend on $G$.

For the case $k=1$, we can write $p_1 \in \operatorname{Span} B_1$ as
\[
p_1 =  {\sum}_{\beta \in \partial B_1} a_\beta y^\beta +
{\sum}_{\gamma\in B_1} a_\gamma y^\gamma,
\]
with coefficients $a_\beta, a_{\gamma}\in \mathbb{C}$.
The reduction of $p$ by $\varphi[G]$
is equivalent to that
\[
\mbox{NF}(p_1;G) \, = \, p_1 +
{\sum}_{\beta\in \partial B_1} a_\beta \varphi[G,\beta]
= {\sum}_{\gamma\in B_1} a'_\gamma(G) y^\gamma,
\]
where $a'_{\gamma}(G)$ is affine linear in the entries of
$G$ and are affine linear in the coefficients of $p$.
So, the coefficients of
$\mbox{NF}(p;G)$ are affine linear in $G$.

For the case $k > 1$, we can write each $p_1$ as
\[
p_1 = p_0 + {\sum}_{\beta \in \partial B_k} a_\beta y^\beta,
\]
with $p_0 \in \operatorname{Span} B_{k-1}$
and coefficients $a_\beta \in \cpx$.
For each $\bt \in \partial B_k$, denote by
$i(\bt)$ the smallest $i\in [n]$ such that
$\bt \in y_i B_{k-1}$. Then
\[
p_1 = p_0 +  {\sum}_{\beta \in \partial B_k} a_\beta
y_{i(\bt)} y^{ \bt - e_{i(\bt)}  },
\]
\[
\mbox{NF}(p_1;G) = \mbox{NF}(p_0;G) +
{\sum}_{\beta \in \partial B_k} a_\beta
\mbox{NF}\Big(y_{i(\bt)} \mbox{NF}(y^{ \bt - e_{i(\bt)}};G); G \Big).
\]
By induction, the coeffieints of
$\mbox{NF}(p_1;G)$ are of degree $\le k$ in $G$.
\end{proof}

\begin{example}\label{ex:smallexample2}
Suppose that $X$ is the Segre variety
$\mathbb{P}^1 \times \mathbb{P}^1\subset \mathbb{P}^3$.
Then $Y$ is the surface in $\mathbb{C}^3$ defined by
$
g(y) \coloneqq y_{3} - y_{1} y_{2}= 0,
$
where $y_{1},y_{2}$ and $y_{3}$ are coordinates\footnote{
If we use the notation in Proposition \ref{lemma:degreebound},
coordinates of $\mathbb{C}^3$ should be $y_{1,0},y_{0,1}$ and $y_{1,1}$.}
of $\mathbb{C}^3$. If we choose
$
B_1 = \lbrace 1, y_1 ,y_2, y_3\rbrace,
$
then
\[
\partial B_1 = \lbrace y_1^2,y_1y_2,y_2^2,y_1y_3,y_2y_3, y_3^2 \rbrace.
\]
As in Example~\ref{ex:smallexample1}, we can get
\begin{align*}
\varphi[G,(2,0,0)] &= (-3 + w_1) +(4 - w_1) y_1  - w_1 y_2 + w_1 y_3   - y_1^2, \\
\varphi[G,(1,1,0)] &= (-1 + w_2) +(1-w_2) y_1 + (1- w_2) y_2 + w_2 y_3 - y_1y_2,\\
\varphi[G,(0,2,0)] &=(83/20 + w_4) + (-27/20 - w_4) y_1 + (9/10-w_4) y_2 + w_4  y_3  - y_2^2,\\
\varphi[G,(1,0,1)] &= (-4 + w_3) + (4 - w_3) y_1 + (1-w_3) y_2 + w_3  y_3 - y_1y_3,\\
\varphi[G,(0,1,1)] &= (63/20 + w_5) + (-7/20 - w_5) y_1 + (9/10-w_5) y_2 + w_5  y_3  - y_2y_3,\\
\varphi[G,(0,0,2)] &= (3/20 + w_6) + (-53/20 - w_6) y_1 + (9/10-w_6) y_2 + w_6  y_3  - y_2y_3.
\end{align*}
The normal form $NF(g;G)$ of $g$ is
\[
NF(g;G)=(1-w_2) - (1-w_2)y_1 - (1-w_2) y_2 +  (1 - w_2) y_3.
\]
The condition $NF(g;G) = 0$ requires that $1-w_2 = 0$.
\end{example}

\subsection{Commutativity conditions}
For each $1\le i \le n$
and $G \in \cpx^{[r] \times \partial B_1}$,
define the matrix $M_i(G) \in \cpx^{r \times r}$ as
\[
(M_i(G))_{s,t} = \begin{cases}
1,\quad~\text{if}~\beta_s = \beta_t + e_i\in B_0, \\
0, \quad ~\text{if}~\beta_s \ne \beta_t + e_i\in B_0, \\
c_{s,\beta_t + e_i},\quad ~\text{if}~\beta_t + e_i \in \partial B_1.
\end{cases}
\]
They are also called multiplication matrices for the ideal $J_G$.

\begin{theorem}
\label{thm:commuting}
Let $B_0$ be the set of first $r$ monomials
whose images in $\mathbb{C}[Y]$ are linearly independent.
Then, for $G \in \cpx^{ [r] \times \partial B_1}$,
the polynomial system
\be \label{eqn:finitesolution}
\varphi[G](y) = 0
\ee
has $r$ solutions (counting multipliciites)
and they belong to $Y$ if and only if
\be \label{eqn:commutativity}
\big[ M_i(G), M_j(G) \big]   =0 \quad (1\le i < j \le n ),
\ee
\be \label{eqn:extracondition}
\mbox{NF}(g_i;G) = 0  \quad (i=1,\dots, N),
\ee
where $\left\langle g_1,\ldots, g_N \right\rangle = I(Y)$.
\end{theorem}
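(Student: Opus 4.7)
The plan is to decouple the two conditions in the statement: \reff{eqn:commutativity} controls when $\varphi[G]$ defines a zero-dimensional ideal $J_G$ of length exactly $r$, while \reff{eqn:extracondition} enforces that the resulting scheme $V(J_G)$ lies on $Y$.

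First I would invoke Mourrain's border basis criterion: the border prebasis $\varphi[G]$ is a genuine border basis of $J_G$ with respect to $B_0$ --- equivalently, the images of $B_0$ descend to a $\cpx$-linear basis of $\cpx[y]/J_G$ --- if and only if the formal multiplication matrices $M_1(G),\ldots,M_n(G)$ pairwise commute. When this holds, $\dim_{\cpx} \cpx[y]/J_G = r$, so by the classical Stickelberger-type correspondence for zero-dimensional ideals, $\varphi[G](y) = 0$ has exactly $r$ solutions counting multiplicity, and each solution is recovered from the joint eigenvalues of $M_1(G),\ldots,M_n(G)$. Conversely, if the system has exactly $r$ solutions counting multiplicity, then $J_G$ is zero-dimensional of length $r$, and since $\cpx[y]/J_G$ is already spanned by the $r$ monomials of $B_0$ in prebasis form, these monomials must form a basis, which forces \reff{eqn:commutativity} by the same criterion.

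Second, once $J_G$ is known to be zero-dimensional of length $r$ via \reff{eqn:commutativity}, the scheme $V(J_G)$ is contained in the variety $Y = V(\I(Y))$ if and only if the ideal inclusion $\I(Y) \subseteq J_G$ holds. Because $\I(Y) = \langle g_1,\ldots,g_N \rangle$, this is equivalent to $g_i \in J_G$ for each $i$, which by Proposition~\ref{proposition:normalform} is equivalent to $\mbox{NF}(g_i;G) = 0$ for all $i$; this is exactly \reff{eqn:extracondition}. Assembling the two equivalences proves the ``if and only if'' of the theorem.

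The main obstacle is establishing the equivalence between commutativity of the multiplication matrices and the border prebasis being a genuine border basis of length $r$; this is the commuting-matrices theorem due to Mourrain~\cite{Mourrain1999} (see also \cite{Stetter2004,Kehrein2005}), which I would simply cite. A secondary delicate point is that ``the $r$ solutions belong to $Y$'' must be read scheme-theoretically in order to match the ideal inclusion $\I(Y) \subseteq J_G$; otherwise, for a non-radical $J_G$ one could have $V(J_G) \subseteq Y$ set-theoretically while $\mbox{NF}(g_i;G) \neq 0$ for some $i$, and the equivalence would fail.
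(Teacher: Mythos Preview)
Your argument is essentially the same as the paper's: the paper cites Theorem~2.4 of \cite{nie2017generating} (itself built on the Mourrain/Stetter/Kehrein border-basis criterion you invoke) for the equivalence between \eqref{eqn:finitesolution} having $r$ solutions and \eqref{eqn:commutativity}, and then uses Proposition~\ref{proposition:normalform} exactly as you do to identify $\I(Y)\subseteq J_G$ with \eqref{eqn:extracondition}. Your closing caveat that ``belong to $Y$'' must be read scheme-theoretically for the equivalence with $\I(Y)\subseteq J_G$ to hold is a genuine subtlety the paper glosses over.
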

\begin{proof}
By Theorem~2.4 of \cite{nie2017generating},
\eqref{eqn:finitesolution} has $r$ solutions if and only if
\eqref{eqn:commutativity} holds. All the solutions are contained in
$Y$ if and only if $\I(Y) \subseteq J_G$,
which is equivalent to that
each normal form $\mbox{NF}(g_i;G) = 0$,
by Proposition \ref{proposition:normalform}.
\end{proof}

%{\bf
%I am thinking about a relevant but very important question.
%In Theorem~\ref{thm:commuting}, if $G$ satisfies the commutative
%conditions there but $\varphi[G](y) = 0$ has a repeated solution,
%then is the following true or not?
%\begin{quote}
%There exists a sequence of generating matrices $G_k$ such that $G_k \to G$,
%each $G_k$ satisfies the commutative conditions
%but $\varphi[G_k](y) = 0$ has $r$ distinct solutions.
%\end{quote}
%If the above is true, then we can conclude that the tensor
%$\mA$ has border $X$-rank $r$? If the above is not true,
%can we get a counterexample?
%I spent a lot of time on this question, did not succeed.
%}

\section{An algorithm for computing STDXs}
\label{sc:frm}

Let $X,Y$ be the varieties as in \reff{def:X} and \reff{set:Y}.
This section discusses how to compute
a symmetric tensor decomposition on $X$
for a given tensor $\mA \in \S^d(\mathbb{C}^{n+1})$
whose symmetric $X$-rank is $r$.
To compute \reff{eq:F=sum:ui:tpd}, it is enough to compute
\be \label{eqn:affinedecomp}
\mA =  \lambda_1 (1, v_1)^{\otimes d}
+ \cdots + \lambda_r (1, v_r)^{\otimes d}
\ee
for vectors $v_1, \ldots, v_r \in Y$
and scalars $\lmd_1, \ldots, \lmd_r$.

Choose $B_0 = \{ x^{\bt_1}, \ldots, x^{\bt_r} \}$
to be the set of first $r$ monomials that are linearly independent in $C[Y]$.
For a point $v\in \mathbb{C}^n$, denote by
$B_0(v)\in \mathbb{C}^r$ the column vector
whose $i$th entry is $v^{\beta_i}$. Denote the Zariski open subset
$D$ of $(\mathbb{C}^{n})^r$
\[
D = \{
(v_1, \ldots, v_r) \in (\mathbb{C}^{n})^r: \,
\det  \begin{bmatrix}
B_0(v_1) & \cdots B_0(v_r)
\end{bmatrix} \ne 0
\}.
\]
Recall that $\varphi[G]$ is the tuple
of generating polynomials as in \reff{set:phiG} and
$\mathcal{G}(\mA)$ denotes the set of
generating matrices for $\mA$.

\begin{theorem}
\label{thm:validation}
Let $X,Y,D$ be as above.
For each $\mA \in \S^d(\mathbb{C}^{n+1})$, we have:
\bit

\item
If $G\in \mathcal{G}(\mA)$ and $v_1,\dots, v_r\in Y$
are distinct zeros of $\varphi[G]$,
then there exist scalars $\lmd_1, \ldots, \lmd_r$
satisfying \reff{eqn:affinedecomp}.
%
%\be \label{eqn:affinedecomp}
%f(y) = \sum_{i=1}^r \lambda_i (1 + v_i^\mathsf{T} y)^d.
%\ee

\item
If the decomposition \eqref{eqn:affinedecomp} holds for $\mA$
and $(v_1,\dots,v_r)\in D$, then there exists a unique
$G\in \mathcal{G}(\mA)$ such that
$v_1,\dots, v_r$ are zeros of $\varphi[G]$.

\eit
\end{theorem}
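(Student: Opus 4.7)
The plan is to work in the duality between $\S^d(\mathbb{C}^{n+1})$ and $\mathbb{C}[y]_d$ provided by the pairing $\langle\cdot,\cdot\rangle$, identifying each tensor $\mA$ with the linear functional $\ell_\mA(p) := \langle p, \mA\rangle$ on $\mathbb{C}[y]_d$. I would prove the two directions separately.

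For the forward direction, assume $G \in \mathcal{G}(\mA)$ and $v_1,\ldots,v_r \in Y$ are distinct zeros of $\varphi[G]$. The first step is to observe that the matrix $V := [v_j^{\beta_i}]_{i,j=1}^r$ is invertible: the $r$ distinct zeros yield $r$ linearly independent evaluation characters on $\mathbb{C}[y]/J_G$, and combined with the border-basis property (which forces $B_0$ to span the quotient), this gives $\dim \mathbb{C}[y]/J_G = r$ with $B_0$ as a basis. Next, I uniquely solve $V^T \lambda = (\mA_{\beta_1},\ldots,\mA_{\beta_r})^T$ and define $\mB := \sum_j \lambda_j (1,v_j)^{\otimes d}$; the goal becomes $\mA = \mB$. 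I would check $\ell_\mA = \ell_\mB$ on $\mathbb{C}[y]_d$. Both functionals vanish on $W_d := \operatorname{Span}\{y^\gamma \varphi[G,\alpha] : \alpha \in \partial B_1,\, |\gamma| \leq d-|\alpha|\}$---for $\ell_\mA$ this is exactly the defining condition \eqref{eqn:generatingmatrix} of $G \in \mathcal{G}(\mA)$, and for $\ell_\mB$ it follows because each $v_j$ is a zero of every $\varphi[G,\alpha]$---while they agree on $\{y^{\beta_i}\}$ by the choice of $\lambda_j$. The conclusion then reduces to proving $\mathbb{C}[y]_d = W_d + \operatorname{Span}(B_0)$.

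For the converse, I proceed constructively. Given $\mA = \sum_j \lambda_j (1,v_j)^{\otimes d}$ with $(v_1,\ldots,v_r) \in D$, the hypothesis makes $V$ invertible, so for each $\alpha \in \partial B_1$ I uniquely solve $V^T c_{\cdot,\alpha} = (v_j^\alpha)_{j=1}^r$ to obtain the columns of $G$. By construction $\varphi[G,\alpha](v_j) = 0$, and a direct substitution gives $\langle y^\gamma \varphi[G,\alpha], \mA\rangle = \sum_j \lambda_j v_j^\gamma \varphi[G,\alpha](v_j) = 0$, so $G \in \mathcal{G}(\mA)$. Uniqueness follows because any generating matrix whose polynomials vanish at $v_1,\ldots,v_r$ must satisfy the same invertible linear system column by column.

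The main technical obstacle is establishing the decomposition $\mathbb{C}[y]_d = W_d + \operatorname{Span}(B_0)$ in the forward direction, since reducing an arbitrary monomial $y^\alpha$ with $|\alpha| \leq d$ into $\operatorname{Span}(B_0)$ via the border basis $\varphi[G]$ must stay within degree $d$ at each step so that every subtracted multiple of some $\varphi[G,\alpha']$ lies in $W_d$ rather than merely in $J_G$. I would handle this by induction on the least $k$ with $y^\alpha \in B_k$, writing $y^\alpha = y_i \cdot y^{\alpha - e_i}$ for some $i$ with $|\alpha - e_i| \leq d - 1$, reducing $y^{\alpha-e_i}$ into $\operatorname{Span}(B_0)$ modulo $W_{d-1}$ by the inductive hypothesis, and then multiplying back by $y_i$ and reducing once more to obtain the required decomposition modulo $W_d$. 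Proposition~\ref{proposition:normalform} provides the core reduction tool, but the additional degree bookkeeping is what requires care.
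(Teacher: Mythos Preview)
Your argument is correct, and it is far more detailed than what the paper actually does: the paper's entire proof is to observe that both statements are the content of Theorem~3.2 in \cite{nie2017generating}, with the single remark that the extra requirement $v_j\in Y$ plays no role in the argument (the generating--matrix machinery is insensitive to the ambient variety $Y$; only the zeros of $\varphi[G]$ matter). So you are not taking a different route so much as reconstructing the cited proof from scratch, which is a legitimate and useful thing to do.

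Two small comments. First, your justification that $V=[v_j^{\beta_i}]$ is invertible in the forward direction is exactly right and worth keeping: border division shows $\operatorname{Span}(B_0)$ surjects onto $\mathbb{C}[y]/J_G$, and the $r$ distinct evaluations force $\dim \mathbb{C}[y]/J_G\ge r$, hence equality, hence $V$ is nonsingular even though $(v_1,\dots,v_r)\in D$ is not assumed in that direction. Second, the degree bookkeeping you flag as the main obstacle does go through via your induction on $k$, but note that the final reduction step (passing from $y_i q$ with $q\in\operatorname{Span}(B_0)$ back to $\operatorname{Span}(B_0)$) uses $\varphi[G,\beta_j+e_i]$ with $\gamma=0$, and this lies in $W_d$ only when $|\beta_j|+1\le d$. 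In other words, the clean argument uses the standing hypothesis $\max_j|\beta_j|\le d-1$, which is implicit here and in \cite{nie2017generating}; it holds in all the regimes the paper considers, but you should state it.
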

\begin{proof}
The conclusions mostly follow from Theorem~3.2 of
\cite{nie2017generating}.
The difference is that we additionally require
the points $v_1,\ldots, v_r \in Y$.
\end{proof}

According to Theorem \ref{thm:validation},
to compute a symmetric $X$-rank decomposition for $\mA$,
we need to find a generating matrix $G \in \mathcal{G}(\mA)$ such that
\begin{enumerate}
\item $\varphi[G]$ has $r$ distinct zeros.
\item The zeros of $\varphi[G]$ are contained in $Y$,
i.e., $\I(Y) \subseteq J_G$.
\end{enumerate}
Conditions $(1)$ and $(2)$ are equivalent to \eqref{eqn:commutativity} and \eqref{eqn:extracondition} by Theorem \ref{thm:validation}.
%
%Assume that $X \subseteq \cpx^{n+1}$ is given by homogeneous
%polynomial equations. Let $Y$ be its dehomogenization.
%
Suppose the vanishing ideal
$I(Y) = \left\langle g_1,\dots, g_N \right\rangle$.
We have the following algorithm.

\begin{algorithm}\label{alg:framework}
For a given $\mA \in \S^d(\mathbb{C}^{n+1})$
with $\srank_X(\mA) \leq r$, do the following:
\bit

\item [Step~0] Choose the set of first $r$ monomials
$y^{\beta_1},\dots, y^{\beta_r}$
that are linearly independent in the quotient ring
$\mathbb{C}[Y] := \mathbb{C}[y]/\I(Y)$,
with respect to the graded lexicographic monomial order.

\item [Step~1] Parameterize the generating matrix
$G(w) =C + N(w)$ as in \eqref{eqn:generatingmatrix2}.

\item [Step~2] For each $g_i$, compute
$\mbox{NF}(g_i;G(w))$ with respect to $\varphi[G(w)]$.

\item [Step~3] Compute a solution of the polynomial system
\be \label{eqn:commuting condition}
\left\{ \begin{array}{rl}
\big[ M_i(G(w)),  M_j(G(w)) \big]   &= 0 \quad (1\le i < j \le n), \\
\mbox{NF}(g_i;G(w)) & = 0 \quad (1\le i \le N).
\end{array} \right.
\ee

\item [Step~4] Compute $r$ zeros $v_1,\dots, v_r\in \mathbb{C}^n$
of the polynomial system
\[
\varphi [G(w),\alpha] = 0 \quad (\alpha\in \partial B_1).
\]

\item [Step~5] Determine scalars $\lambda_1,\dots, \lambda_r$
satisfying \reff{eqn:affinedecomp}.

\eit
\end{algorithm}

The major task of Algorithm~\ref{alg:framework}
is in Step~3. It requires to solve a set of polynomial system,
which is given by commutative equations and normal forms.
The commutative equations are quadratic in the parameter $w$. The equations
$\mbox{NF}(g_i;G) =0$ are polynomial in $w$,
whose degrees are bounded in Lemma~\ref{lemma:degreebound}.
One can apply the existing symbolic or
numerical methods for solving polynomial equations.
In Step~4, the polynomials $\varphi[G(w),\af]$
have special structures. One can get a Gr\"{o}bner basis
quickly, hence their zeros can be computed efficiently.
We refer to \cite{Corless1997A} and
\cite[Sec.~2.4]{nie2017generating}
for how to compute their common zeros.

\begin{remark} \label{rmk:val(r)}
In Algorithm~\ref{alg:framework}, we need to know a value of $r$,
with $r \geq \srank_X(\mA)$. Typically, such a $r$ is not known.
In practice, we can choose $r$ heuristically. For instance,
%when $\mA$ is generically given,
we can choose $r$ to be the expected generic rank given in
the subsection~3.2. If the flattening matrices of $\mA$
have low ranks, we can choose $r$ to be the maximum of their ranks.
For any case, if the system \reff{eqn:commuting condition}
cannot be solved, we can increase the value of $r$
and repeat the algorithm.
\end{remark}

We conclude this section with an example
of applying Algorithm \ref{alg:framework}.

\begin{example}
%\label{ex:smallexample2}
Let $\mA \in \S^3(\mathbb{C}^4)$ be the same tensor as in
Example~\ref{ex:smallexample1}. Let $X \subseteq \cpx^4$
be the set whose dehomogenization $Y\subseteq \mathbb{C}^3$ is the surface whose defining ideal is $I(Y) = \langle y_3 - y_1 y_2 \rangle $.
%
%We notice that $f(y)$ has a symmetric $X$-decomposition of length four:
%\[
%f(y) = (1 + y_1 + y_2 +y_3)^3 + 2(1 + 3y_1 + y_2 + 3 y_3)^3
%- (1 + y_1 - y_2 -y_3)^3 - 3(1 + y_1 + 2y_2 + 2 y_3)^3.
%\]
%This implies $\srank_X(\mA) \le 4$.
%
The maximum rank of flattening matrices of $\mA$ is $3$,
so we apply Algorithm~\ref{alg:framework} with $r=3$.
%(\red{What is the maximum rank of the flattening matrices?
%Is it $3$? We should do the same thing as we have said in the Remark?})
Choose $B_0 = \{1,y_1,y_2\}$. From the calculation in
Example~\ref{ex:smallexample1}, we can get
\begin{align*}
M_{1}(G(w)) &=  \left[ \begin {array}{rrr} 0&-3&-1\\ \noalign{\medskip}1&4&1
\\ \noalign{\medskip}0&0&1\end {array} \right],  \quad
M_{2}(G(w)) =  \left[ \begin {array}{rrr} 0&-1&{\frac{83}{20}}\\ \noalign{\medskip}0
&1&-{\frac{27}{20}}\\ \noalign{\medskip}1&1&{\frac{9}{10}}\end {array}
 \right], \\
M_{3}(G(w)) &=  \left[ \begin {array}{rrr} -1&-4&{\frac{63}{20}}\\ \noalign{\medskip}
1&4&-{\frac{7}{20}}\\ \noalign{\medskip}1&1&{\frac{9}{10}}\end {array}
 \right].
\end{align*}
The matrices $M_1(G(w)),M_2(G(w))$ and $M_3(G(w))$ commute.
The generating polynomials are
\begin{align*}
\varphi[G,(0,0,1)] &= (-1 + y_1 + y_2) - y_3, \\
\varphi[G,(2,0,0)] &= (-3 + 4y_1 ) - y_1^2,\\
\varphi[G,(1,1,0)] &= (-1 + y_1 + y_2) -y_1y_2 ,\\
\varphi[G,(0,2,0)] &= (83/20 -27/20 y_1 + 9/10 y_2) -y_2^2 ,\\
\varphi[G,(1,0,1)] &=  (-4 + 4y_1 + y_2) -y_1y_3  ,\\
\varphi[G,(0,1,1)] &=(63/20 -7/20 y_1 +9/10 y_2 ) -y_2y_3.\\
\end{align*}
They have $3$ common zeros. There are no radical formulae for them,
but they can be numerically evaluated as
\[
(3,1,3),\quad (1,-1.283,-1.283),\quad (1,2.183,2.183).
\]
The given symmetric $X$-decomposition for $\mA$ as
\begin{align*}
\widetilde{\mA}(y) &\coloneqq 2 \, \left(  1+ 3\,y_{{1}}+ \,y_{{2}}+ 3 \,y_{{3}} \right) ^
{3}- 0.7353\, \left(  1+ \,y_{{1}}- 1.283\,y_{{2}}- 1.283\,y_{{3}
} \right) ^{3} \\
&- 2.265\, \left(  1+ \,y_{{1}}+ 2.183\,y_{{2}}+
 2.183\,y_{{3}} \right) ^{3}.
\end{align*}
Because of numerical errors, we do not have $\widetilde{\mA} = \mA$
exactly, but the round-off error
$\lVert \widetilde{\mA} -\mA  \rVert \approx 6.84 \cdot 10^{-16}$.
\end{example}

\section{Numerical experiments}
\label{sec:applications}

In this section, we present examples of applying
Algorithm~\ref{alg:framework} to compute symmetrix $X$-rank decompositions.
The computation is implemented in a laptop with a 2.5 GHz Intel Core i7 processor.
The software for carrying out numerical experiments is {\tt Maple 2017}.
We solve the system~\reff{eqn:commuting condition} by the built-in
function {\tt fsolve} directly.
%(\red{What software is used? Maple?
%What method is used to solve \reff{eqn:commuting condition}?})
%For convenience, we label a tensor $\mA$ as in \reff{newidx:af}.
The algorithm returns a decomposition
\[
\widetilde{\mA} \coloneqq   \widetilde{\lambda}_1 (1, \widetilde{v}_1)^{\otimes d}
+ \cdots + \widetilde{\lambda}_r (1, \widetilde{v}_r)^{\otimes d}.
\]
Because of round-off errors, the equation $\widetilde{\mA} = \mA$
does not hold exactly.
We use the absolute error $\lVert \mA -\widetilde{\mA} \rVert$
or the relative one
$\lVert \mA -\widetilde{\mA} \rVert/\lVert \mA \rVert$
to verify the correctness. Here, the Hilbert-Schmidt norm of $\mA$ is used, i.e.,
\[
\|\mA\|  = \Big(
{\sum}_{i_1,\ldots, i_m }  |\mA_{i_1 \ldots i_m}|^2
\Big)^{1/2}.
\]
We display the computed decompositions by showing
\[
\widetilde{V} = \bbm \widetilde{v}_1 \\ \vdots \\
\widetilde{v}_r \ebm, \quad
\widetilde{\Lambda} = \bbm \widetilde{\lmd}_1 \\ \vdots \\
\widetilde{\lmd}_r \ebm .
\]
For neatness, only four decimal digits are shown,
and $i$ denotes the unit pure imaginary number.
If the real or imaginary part of a complex number is
smaller than $10^{-10}$, we treat it as zero and do not display it,
for cleanness of the paper. To apply Algorithm~\ref{alg:framework}, we need a value
for the rank $r$. This issue is discussed in Remark~\ref{rmk:val(r)}.
%%%%%%%%%%%%%%%%%%%%%%%%%
\iffalse
\red{To decompose the given tensor $\mA$ on the variety $X$ more efficiently, we could first estimate a lower bound for $\rank_X(\mA)$. The simplest such a lower bound is provided by ranks of flattening matrices of $\mA$. For example, if a third order symmetric tensor $\mA \in S^3 \mathbb{C}^n\subseteq \mathbb{C}^n \otimes (\mathbb{C}^n \otimes \mathbb{C}^n)$ can be regarded as an $n \times n^2$ matrix $\mA_{1}$. If $\mA$ is generic, then $\mA_{1}$ has rank $n$ and this gives us a lower bound for $\srank_X(\mA)$. Once we determine a lower bound $r_0$, we may apply Algorithm~\ref{alg:framework} with $r = r _0$. If one of linear systems \eqref{eqn:generatingmatrix1} is not consistent, then it implies that the $\srank_X (\mA) > r_0$ and we may start over with $r =r _0 + 1$. We repeat this process until all systems \eqref{eqn:generatingmatrix1} are consistent.}

\fi
%%%%%%%%%%%%%%%%%%%%%%%%%%%%%%%%%%%%%%
In our computation, we initially choose $r$ to be the maximum
rank of the flattening matrices of the given tensor $\mA$.
If the equations~\eqref{eqn:generatingmatrix1}
or \eqref{eqn:commuting condition} are inconsistent,
we need to increase the value of $r$ by one,
until Algorithm~\ref{alg:framework} successfully returns a decomposition.

For the set $Y$ dehomogenized from $X$ as in \reff{set:Y},
we need generators of its vanishing ideal $I(Y)$.
For some $Y$, it is easy to compute the generators of $I(Y)$;
for some $Y$, it may be difficult to compute them.
This is a classical, standard problem in symbolic computation.
We refer to \cite{ECW1992,FGT2002,GBG1988} for the related work.
So, we do not focus on how to compute generators of $I(Y)$ in this paper.
In our examples, the generators of $I(Y)$ are known or can be
computed easily.

%\red{In all our examples, we need to display generators of
%$I(Y)$, sine this is a crucial part of computation. }

First we illustrate how to apply Algorithm~\ref{alg:algorithm1}
to detect the existence of the STDX for a given $\mA$ and $\PP X$.
\begin{example}
We consider $\mA \in \S^3(\cpx^4)$ that is given as 
\[
\mA_{ijk} = i + j + k
\]
and $X\subseteq \PP^3$ that is defined by $x_2^2 = x_1^2+x_0^2$, $x_3^2 = x_2^2+x_1^2$.
%The vanishing ideal of $Y$ is
%\[
%I(Y) = \langle  y_2^2 - y_1^2 - 1,y_3^2 - y_2^2 - y_1^2 \rangle .
%\]
According to Algorithm~\ref{alg:algorithm1}, we have
\[
f_1 = x_2^2 - x_1^2 - x_0^2,\quad f_2 = x_3^2 - x_2^2 - x_1^2.
\]
We let $\beta \in \mathbb{N}^4_{1}$ be $\beta = (1,0,0,0)$ and hence we have $f_{1,\beta} = f_1 x_0$. It is straightforward to verify that $\langle f_{1,\beta}, \mA \rangle \ne 0$
and hence $\mA$ has no STDX for $X$. Another example is
\[
\mA \in \S^4(\cpx^3), \quad
\mA_{ijkl} = (i + j + k + l)^2 - (i^2+j^2+k^2+l^2)
\]
and $X\in \PP^2$ is defined by $x_0x_1 + x_1 x_2 + x_0 x_2 =0,
x_0^2x_1 + x_1^2 x_2 + x_2^2 x_0 =0$. By Algorithm~\ref{alg:algorithm1} again,
we can show easily that $\mA$ does not admit an STDX for such $X$.
\end{example}

The resting examples in this section are devoted to exhibit the validity and efficiency of Algorithm~\ref{alg:framework}.
To this end, we make the following convention on the representations of tensors.
Recall that a tensor $\mathcal{A} \in \S^d(\mathbb{C}^{n+1})$
is an array of numbers whose elements are indexed by $(i_1,\dots,i_d)$, i.e.,
$\mathcal{A} = (\mathcal{A}_{i_1,\dots, i_d})$, where $0\le i_1,\dots, i_d \le n$.
As in Section~\ref{ssc:equdes}, we can equivalently represent $\mathcal{A}$ by $\mathcal{A}_\alpha$'s,
where $\alpha = (\alpha_0,\dots,\alpha_n)\in \mathbb{N}_d^{n+1}$ satisfies $\lvert \alpha \rvert = d$.
To be more precise, for each element in $\{(i_1,\dots, i_d): 0 \le i_1,\dots, i_d \le n\}$, we have
\[
\mathcal{A}_{i_1,\dots, i_d} = \mathcal{A}_{\alpha},
\]
where $\alpha \in \mathbb{N}_d^{n+1}$ is the sequence such that $\lvert \alpha  \rvert = d$
and $x_0^{\alpha_0} \cdots x_n^{\alpha_n}= x_{i_1} \cdots x_{i_d}$.
We may list the entries
$\mathcal{A}_\alpha$ with respect to the lexicographic order, i.e.,
$\mathcal{A}_\alpha$ precedes $\mathcal{A}_\beta$ if and only if
the most left nonzero entry of $\af-\bt$ is positive.
%there exists an integer
%$-1 \le s \le n-1 $ such that $\alpha_0 = \beta_0, \dots, \alpha_s = \beta_s$
%but$\alpha_{s+1} > \beta_{s+1}$.
For instance, a binary cubic tensor
$\mathcal{A} = \in \S^3(\mathbb{C}^2)$
can be displayed as $\mathcal{A}_{30}, \mathcal{A}_{21}, \mathcal{A}_{12}, \mathcal{A}_{03}$.
In the following examples, we will represent a symmetric tensor $\mA$ in this way.

\begin{example}
Let $\PP X\subseteq \mathbb{P}^2$ be the parabola defined by
$x_2^2 - x_0x_1 + x_0^2 = 0$.
Let $\mA \in \S^3(\mathbb{C}^3)$ be the symmetric tensor such that
\begin{align*}
\mA_{300}& = 15, \mA_{210} = 81,\mA_{201} = -6,\mA_{120} = 621,\mA_{111} = -108, \mA_{102} =66,\\
\mA_{030} &=5541, \mA_{021} = -1296,\mA_{012} = 540,\mA_{003} = -102.
\end{align*}
The vanishing ideal of $Y \subseteq \mathbb{C}^2$ is
\[
I(Y) = \langle  y_2^2 - y_1 + 1  \rangle .
\]
The maximum rank of flattening matrices of $\mA$ is $3$.
When we run Algorithm~\ref{alg:framework} with $r=3$,
it does not give a desired tensor decomposition.
So, we apply Algorithm~\ref{alg:framework} with $r = 4$ and the symmetric $X$-decomposition with
\[
\widetilde{V} =  \left[ \begin {array}{cc}  9.874- 0.002786\,i&- 2.979+ 0.0004677\,i
\\ \noalign{\medskip} 1.151- 0.006293\,i&- 0.3886+ 0.008096\,i
\\ \noalign{\medskip} 4.124+ 0.02027\,i& 1.768+ 0.005734\,i
\\ \noalign{\medskip} 16.62+ 1.251\,i& 3.956+ 0.1581\,i\end {array}
 \right]
,\quad \widetilde{\Lambda} =   \left[ \begin {array}{c}  5.184+ 0.0036
\,i\\ \noalign{\medskip} 3.694+ 0.0184\,i
\\ \noalign{\medskip} 6.098- 0.0154\,i
\\ \noalign{\medskip} 0.0246- 0.0066\,i
\end {array} \right].
\]
We have $\lVert {\mA} \rVert = 7241.79$
and the error $\lVert \mA - \widetilde{\mA} \rVert = 8 \cdot 10^{-14}$.
%\quad \frac{\lVert \mA - \widetilde{\mA} \rVert }
%{\lVert \widetilde{\mA} \rVert} =  10^{-17}.
\end{example}

\begin{example}
Let $\PP X \subseteq \PP^2$ be the nodal curve defined by
$
x_1^3 + x_0 x_1^2 - x_0x_2^2 = 0.
$
The vanishing ideal of $Y \subseteq \mathbb{C}^2$ is
\[
I(Y) = \langle  y_1^3 + y_1^2 - y_2^2 \rangle .
\]
Let $\mA \in \S^3(\mathbb{C}^3)$ be the symmetric tensor such that
\begin{align*}
\mA_{300} &=  3,\mA_{210} =24, \mA_{201} = 72, \mA_{120} =
144,  \mA_{111}  = 456,
\mA_{102} =
1224, \\
\mA_{030}  &=  1080,
 \mA_{021} = 3288,
 \mA_{012} =9432,
  \mA_{003} =
28512.
\end{align*}
The maximum rank of flattening matrices of $\mA$ is $3$.
When we run Algorithm~\ref{alg:framework} with $r=3,4$,
it fails to give a desired tensor decomposition.
So, we apply Algorithm~\ref{alg:framework} with $r=5$
and get the symmetric $X$-decomposition with
\[
\widetilde{V} =  \left[ \begin {array}{cc}  3.029- 0.07505\,i&- 6.079+ 0.2073\,i
\\ \noalign{\medskip}- 1.047+ 0.1114\,i& 0.2338+ 0.2814\,i
\\ \noalign{\medskip} 2.615- 0.09868\,i& 4.970- 0.2555\,i
\\ \noalign{\medskip} 7.927- 0.008981\,i& 23.68- 0.03874\,i
\\ \noalign{\medskip}- 5.685- 3.079\,i& 10.84- 10.80\,i\end {array}
 \right],\quad \widetilde{\Lambda} =  \left[ \begin {array}{c} - 0.9769-
 0.06497\,i\\ \noalign{\medskip}- 1.4650-
 0.1107\,i\\ \noalign{\medskip} 3.3470+
 0.1587\,i\\ \noalign{\medskip} 2.0980+
 0.01425\,i\\ \noalign{\medskip}- 0.002438
+ 0.002715\,i\end {array} \right].
\]
We have $\lVert {\mA} \rVert = 41632.56$
and the error $\lVert \mA - \widetilde{\mA} \rVert = 4 \cdot 10^{-13}$.
% \quad \frac{\lVert \mA - \widetilde{\mA} \rVert }{\lVert \widetilde{\mA} \rVert}
% =  9\cdot 10^{-18}.
\end{example}

\begin{example}
\label{example:rationalscroll2}
Let $\PP X \subseteq \PP^3$ be the union of two planes defined by
$(x_3 - x_2)(x_1 - x_0) = 0$.
Let $\mA \in \S^3(\mathbb{C}^4)$ be the symmetric tensor such that
\begin{align*}
\mA_{3000} &= 2,\mA_{2100} = 1, \mA_{2010} = 5,\mA_{2001} = -3, \mA_{1200} = 5,\mA_{1110} = 10, \mA_{1020} =9, \\
\mA_{1101} &= 2,\mA_{1011} = 7, \mA_{1002} = 5,\mA_{0300} = -5,\mA_{0210} = 2, \mA_{0120} = 8, \mA_{0030} = 29, \\ \mA_{0201} &= -6,\mA_{0111} = 6,\mA_{0021} =  7, \mA_{0102} =4,\mA_{0012} = 5,\mA_{0003} = -9.
\end{align*}
The vanishing ideal of $Y \subseteq \mathbb{C}^3$ is
\[
I(Y) = \langle (y_3 - y_2)(y_1 - 1)  \rangle .
\]
The maximum rank of flattening matrices of $\mA$ is $4$.
When we run Algorithm~\ref{alg:framework} with $r=4$,
it fails to give a desired tensor decomposition.
So, we use $r = 5$ and apply Algorithm \ref{alg:framework}.
It returns the symmetric $X$-decomposition
\[
\widetilde{V} =  \left[ \begin {array}{ccc} - 2.0&- 1.0&- 1.0\\ \noalign{\medskip} 1.0
&- 1.487& 1.0\\ \noalign{\medskip} 1.0& 2.287& 1.0
\\ \noalign{\medskip}- 1.0& 1.0& 1.0\\ \noalign{\medskip} 1.0&
 0 %  -{ 1.596 \cdot 10^{-19}}
&- 2.0\end {array} \right] ,\quad \widetilde{\Lambda} =  \left[ \begin {array}{c}  1.0\\ \noalign{\medskip}-
 1.249\\ \noalign{\medskip} 2.249
\\ \noalign{\medskip}- 1.0\\ \noalign{\medskip} 1.0\end {array}
 \right].
\]
We have $\lVert \mA \rVert = 104.86$ and the error
$\lVert \mA  - \widetilde{\mA} \rVert = 10^{-15}$.
%\frac{\lVert \mA  - \widetilde{\mA} \rVert }{\lVert \widetilde{\mA} \rVert}
%=  10^{-17}.
\end{example}

\begin{example}
Let $\PP X\subseteq \PP^3$ be the surface defined by
$-3x_1x_2^2 + x_1^3 - x_0^2x_3 = 0$.
Then $Y \subseteq \mathbb{C}^3$ is the monkey saddle surface whose vanishing idea is
\[
I(Y) = \langle -3y_1y_2^2 + y_1^3 - y_3  \rangle .
\]
Let $\mA \in \S^3(\mathbb{C}^4)$
be the symmetric tensor such that
\begin{align*}
\mA_{3000} &= 5,\mA_{2100} = -1, \mA_{2010} = 6,\mA_{2001} = -13, \mA_{1200} = 9,\mA_{1110} = 8, \mA_{1020} =6, \\
\mA_{1101} &= -33,\mA_{1011} = -16, \mA_{1002} = 87,\mA_{0300} = 17,\mA_{0210} = 24, \mA_{0120} = 10, \mA_{0030} = 12, \\ \mA_{0201} &= -91,\mA_{0111} = -54,\mA_{0021} =  -38, \mA_{0102} = 233,\mA_{0012} = 5,\mA_{0003} = -739.
\end{align*}
The maximum rank of flattening matrices of $\mA$ is $4$.
When we run Algorithm~\ref{alg:framework} with $r=4,5$,
it fails to give a desired tensor decomposition.
So, we apply Algorithm \ref{alg:framework} with $r=6$
and get the symmetric $X$-decomposition
\tiny
\begin{align*}
\widetilde{V} &=  \left[ \begin {array}{ccc}  5.936- 0.8124\,i& 3.582- 0.4510\,i&-
 19.62+ 2.978\,i\\ \noalign{\medskip}- 0.4115- 0.5223\,i&- 1.801+
 1.185\,i& 9.226- 2.511\,i\\ \noalign{\medskip} 2.042- 0.01150\,i&-
 1.031+ 0.008333\,i& 2.008- 0.001995\,i\\ \noalign{\medskip}- 0.6422-
 0.02100\,i& 0.9440+ 0.001783\,i& 1.453+ 0.03664\,i
\\ \noalign{\medskip} 2.157- 0.08836\,i& 1.576- 0.04318\,i&- 6.043+
 0.3063\,i\\ \noalign{\medskip}- 1.0-{ 1.505\cdot 10^{-18}}\,i&-{
 3.163\cdot 10^{-18}}-{ 1.004\cdot 10^{-18}}\,i&- 1.0+{ 1.519\cdot
10^{-18}}\,i\end {array} \right],\\
\widetilde{\Lambda} &=  \left[ \begin {array}{c}  0.03454+
 0.02496\,i\\ \noalign{\medskip} 0.001235+
 0.00440\,i\\ \noalign{\medskip}- 0.9564-
 0.01231\,i\\ \noalign{\medskip} 2.0480-
 0.05064\,i\\ \noalign{\medskip} 1.8730+
 0.03358\,i\\ \noalign{\medskip} 2.0+{
 1.8560\cdot 10^{-17}}\,i\end {array} \right],\quad \lVert \mA \rVert = 1275.93,\quad \lVert \mA  - \widetilde{\mA} \rVert = 2\cdot 10^{-15}, \quad \frac{\lVert \mA  - \widetilde{\mA} \rVert }{\lVert \widetilde{\mA} \rVert} =  2\cdot 10^{-18}.
\end{align*}
\end{example}

In the next two examples, we still display the tensor entries
$\mA_\alpha$ according to the lexicographic order,
but we drop the labelling indices, for cleanness of the paper.
\begin{example}
Let $\PP X \subseteq \PP^4$ be the curve defined by
\begin{align*}
x_1x_3 - x_0 x_2 - x_0x_1 + x_0 x_3 &= 0,\,
x_3^2 - x_0x_1 - x_0^2 =0,\\
x_1x_4 + 4 x_1x_3 - x_1 x_2 - x_1^2 + 5x_1 &=0.
\end{align*}
The vanishing ideal of $Y\subseteq \mathbb{C}^4$ is then
\[
I(Y) = \langle  y_1y_3 - y_2 - y_2 + y_3,  y_3^2 - y_1 -1, y_1y_4 + 4y_1y_3 - y_1y_2 - y_1^2 + 5y_1\rangle .
\]
Let $\mA \in \S^3(\mathbb{C}^5)$ be the symmetric tensor whose
%%%%%%%%%%%%%%%%%%%%%%
\iffalse
\begin{align*}
\mA_{30000} &= -7,\mA_{21000} = -2, \mA_{20100} = 87, \mA_{20010} = 25, \mA_{20001} = 20,\mA_{12000} = 26,\\
\mA_{11100} &=334,\mA_{10200} = -233,\mA_{11010} = 60,\mA_{10110} = -45,\mA_{10020} = -9, \mA_{11001} = 130, \\
\mA_{10101} &= -144, \mA_{10011} = -74,\mA_{10002} = 182,\mA_{03000} = 406,\mA_{02100} =  1754, \mA_{01200} = 1150, \\
\mA_{00300} &= 13647,\mA_{02010} = 300, \mA_{01110} = 156, \mA_{00210} = 2353,\mA_{01020} = 24, \mA_{00120} = 421,\\
\mA_{00030} &= 85,
\mA_{02001} = 830,\mA_{01101} = 610, \mA_{00201} = 6500,\mA_{01011} = 60,\mA_{00111} = 1050,\\ \mA_{00021} &= 150,
\mA_{01002} =550,\mA_{00102}= 3630,\mA_{00012} = 880,\mA_{00003} = -250.
\end{align*}

\fi
%%%%%%%%%%%%%%%%%%%%%%%%
$35$ entries are
\begin{multline*}
-7,  -2,   87,   25,   20,  26,
334,  -233,  60,  -45,  -9,   130,
-144,   -74,  182,  406,   1754,   1150,   \\
13647,  300,   156,   2353,  24,   421,
 85,  830,  610,   6500,  60, 1050,  150,
 550,  3630,  880,  -250.
\end{multline*}
The maximum rank of flattening matrices of $\mA$ is $5$.
When we run Algorithm~\ref{alg:framework} with $r=5$,
it fails to give a desired tensor decomposition.
So, we apply Algorithm \ref{alg:framework} with $r=6$
and get the symmetric $X$-decomposition
\[
\widetilde{V} = \left[ \begin {array}{cccc}
680.9 & 17130.0  & 26.11  & 17700.0  \\
 7.717  & 18.02 &  2.952 & 8.926 \\
0 & -1.0 & -1.0 & -10.52- 8.239\,i  \\
-0.2272 &- 0.4521 &- 0.8791 & -2.163  \\
0.7526  & 1.568 & 1.324 & -7.975 \\
2.945 & -10.78 & -1.986 & -4.891
\end {array} \right] , \quad
\widetilde{\Lambda}  =  \left[ \begin{array}{c}
0  \\  1.1720  \\  0 \\   -6.780  \\   3.871 \\  -5.263
\end {array} \right].
\]
We have the norm
$\lVert {\mA} \rVert = 29222.16$
and the error
$\lVert \mA - \widetilde{\mA} \rVert =  10^{-10}$.
%
% \frac{\lVert \mA - \widetilde{\mA} \rVert }{\lVert \widetilde{\mA} \rVert}
% = 3\cdot 10^{-15}.
%
\end{example}

\begin{example}
Let $\PP X \subseteq \PP^4$ be the surface defined by
\begin{align*}
x_3^2 + x_4^2 - x_0x_1 = 0,\quad x_3 x_4 - x_0 x_2 =0.
\end{align*}
Then the vanishing ideal of the variety $Y\subseteq \mathbb{C}^4$ is
\[
I(Y) = \langle y_3^2 + y_4^2 - y_1,  y_3 y_4 - y_2\rangle .
\]
Let $\mA \in \S^4(\mathbb{C}^5)$ be the symmetric tensor whose
%%%%%%%%%%%%%%%%%%%%%
\iffalse

\begin{align*}
\mA_{40000} &= 22,\mA_{31000} =38,\mA_{30100} =  89,\mA_{30010} = 6, \mA_{30001} =34,
\mA_{22000} = 220, \\
\mA_{21100} &= 490, \mA_{20200} = 79, \mA_{21010} = 119,\mA_{20110} = 65, \mA_{20020} = 32, \mA_{21001} = 165, \\
\mA_{20101} &= 71,
\mA_{20011} = 89, \mA_{20002}=6,
\mA_{13000} = 2216,\mA_{12100} =3044, \mA_{11200} = 686, \\
\mA_{10300} &= 653, \mA_{12010} = 1029,
\mA_{11110} =490, \mA_{10210} = 239,\mA_{11020} = 195,\mA_{10120} = 173, \\
\mA_{10030} &= 48,
\mA_{12001} = 1111,\mA_{11101} = 574, \mA_{10201} = 257,\mA_{11011} = 490,\mA_{10111} = 79,\\
\mA_{10021} &= 65, \mA_{11002} = 25,\mA_{10102} = 317,\mA_{10012} = 71, \mA_{10003} = 100, \mA_{04000} = 21424,\\
\mA_{03100} &= 20440, \mA_{02200} = 6028,\mA_{01300} =4570,
\mA_{00400} =  1615, \mA_{03010} =8033, \mA_{02110} = 3788,\\
\mA_{01210} &= 1918,
\mA_{00310} = 929,\mA_{02020} = 1553
\mA_{01120} = 1162,\mA_{00220} = 415,\mA_{01030} =455, \\
\mA_{00130} &= 233, \mA_{00040} = 116, \mA_{03001} =8187, \mA_{02101} = 4316,
\mA_{01201} =1954, \mA_{00301} = 1007,\\
\mA_{02011} &= 3044, \mA_{01111} =686,
\mA_{00211} = 653, \mA_{01021} =490,  \mA_{00121} = 239, \mA_{00031} = 173, \\
\mA_{02002} &= 663,
\mA_{01102} = 1882, \mA_{00202} = 271,\mA_{01012} = 574,\mA_{00112} = 257, \mA_{00022} = 79,\\
\mA_{01003} &= 621,\mA_{00103} = 335,\mA_{00013} = 317,\mA_{00004} = -54.
\end{align*}

\fi
%%%%%%%%%%%%%%%%%%%%%%
$70$ entries are respectively
\begin{multline*}
22,38,  89, 6, 34, 220, 490, 79,  119, 65, 32,  165,
71, 89, 6, 2216,3044,  686, 653, 1029, \\
490, 239,195, 173, 48, 1111,574,  257, 490, 79,
65, 25, 317, 71,  100,  21424, 20440, \\
6028,4570, 1615, 8033,  3788,
1918, 929,1553, 1162,415,455,  233,  116, 8187, 4316, 1954, \\
1007, 3044, 686, 653, 490,  239,  173,
663, 1882, 271, 574, 257,  79,
621,335, 317, -54.
\end{multline*}
The maximum rank of flattening matrices of $\mA$ is $10$.
When we run Algorithm~\ref{alg:framework} with $r=10$,
we get the symmetric $X$-decomposition
\tiny
\begin{align*}
\widetilde{V} &=  \left[ \begin {array}{cccc}
5.0 & -2.0 & -1.0 & 2.0 \\
0.7596+ 0.2348\,i & 0.2187+ 0.5369\,i & 1.0 & 0.2187+ 0.5369\,i \\
1.937+ 0.1658\,i & 0.9718+ 0.08532\,i & 1.0 & 0.9718+ 0.08532\,i\\
2.0  & 1.0 &- 1.0 & -1.0  \\
 1.0 & 0 & 0 & 1.0  \\
 2.065+ 0.07655\,i & -1.033-0.03706\,i & 1.0 &- 1.033- 0.03706\,i\\
 5.003+ 0.006546\,i & -2.001-0.001636\,i & 1.0 &- 2.001- 0.001636\,i \\
 2.0 & -1.0 & -1.0 & 1.0 \\
 4.999+ 0.01109\,i& 2.0+ 0.002774\,i & 1.0 & 2.0+ 0.002774\,i \\
 8.0 & 4.0 & 2.0  & 2.0
\end {array} \right], \\
\widetilde{\Lambda} &=  \left[ \begin {array}{c}
- 10.0 \\  0.505+ 0.297\,i \\   3.232- 0.413\,i
\\   11.0  \\  11.0 \\ -3.778 + 0.132\,i
\\  -7.970+ 0.053\,i \\  5.0 \\  6.01- 0.069\,i \\  7.0
\end {array} \right],\quad \lVert {\mA} \rVert = 147394.37
,\quad \lVert \mA - \widetilde{\mA} \rVert =  8\cdot 10^{-11}.
%
%\quad \frac{\lVert \mA - \widetilde{\mA} \rVert }
%{\lVert \widetilde{\mA} \rVert} = 6 \cdot 10^{-16}.
%
\end{align*}
\end{example}

%
%\subsection{Vandermonde rank decomposition of non-symmetric tensors}\label{subsec:Vandermonde}
%
We conclude this section by considering various examples on Segre varieties.
\begin{example}
(Vandermonde decompositions of nonsymmetric tensors)
Each tensor $\mA \in (\cpx^{d+1})^{\otimes k}$ has a
Vandermonde decomposition as in \reff{eqn:Vanderdecomp},
which is proved in Theorem~\ref{thm:HDnst}.
We can view $\mA$ as a tensor in $\S^d(\cpx^{2^k})$
with the set $X \subseteq \cpx^{2^k}$ such that
$\PP X = \PP^1 \times \cdots \times \PP^1$ ($\PP^1$ is repeated $k$ times).
Let \[ n = 2^k - 1. \]
A vector $x \in \cpx^{2^k}$ can be labelled as
$x = (x_\nu)$, with binary vectors $\nu \in \{0, 1\}^k$.
Under this labelling, the set $X$ is defined by the homogeneous equations 
(c.f.~\cite[Example~2.11]{Harris1992}) or \cite[Section~4.3.5]{Land12})
\be \label{x:mu+nu=et+th}
x_{\mu} x_\nu - x_\eta x_\theta = 0
\ee
for all $\mu, \nu, \eta, \theta \in \{0, 1\}^k$ such that
for some $1 \leq i < j \leq k$,
\[
\mu_l  = \nu_l = \eta_l = \theta_l \, (l\ne i,j), \quad
\mu_i+\nu_j = \eta_j + \theta_i.
\]
Under the dehomogenization $x_{0\ldots 0}=1$,
the corresponding affine variety $Y\subseteq \mathbb{C}^{2^k-1}$
consists of vectors $y$, labelled as
$y = (x_\mu)$ with $0 \ne \mu \in \{0, 1\}^k$,
%satisfying similar equations as in \reff{x:mu+nu=et+th}.
defined by the vanishing ideal
\[
I(Y) = \langle  y_{\mu}y_{\nu} - y_{\eta} y_{\theta} \rangle,
\]
where $\mu,\nu,\eta,\theta\in \{0,1\}^k$ are as above and $y_{0 \ldots 0} = 1$.

We apply Algorithm \ref{alg:framework} to compute Vandermonde decompositions
for random $\mA \in (\cpx^{d+1})^{\otimes k}$ whose entries
are randomly generated (obeying the normal distribution).
%The value of $r$ is chosen to be the expected generic rank
%given by in Example~\ref{example:segre and hypersurface}.
For all the instances, Algorithm~\ref{alg:framework} successfully got
Vandermonde rank decompositions.
The relative errors $\frac{\lVert \mA - \widetilde{\mA} \rVert }{\lVert \mA \rVert}$
are all in the magnitude of $O(10^{-16})$.
The consumed computtaional time (in seconds) is also reported.
The results are displayed in Table~\ref{table:decompsegre}.
%\red{**Can Ke do more computations to fill the table?}
%%%%%%%%%%%%%%%%%%%%%%%%%%%%%%
%%% the old table

\begin{table}[htb]
\centering
\caption{Computational results on symmetric $X$-decompositions on Segre varieties}
 \label{table:decompsegre}
\begin{tabular}{lllrr | lllrr}
\toprule
$k$  & $n$ & $d$ & $r$  & time & $k$  & $n$ & $d$ & $r$  & time \\ \hline
2 & 3 & 3 & 4 &  7.07   & 3 & 7 & 3 & 7  &  7.45  \\
2 & 3 & 4 & 4 &  7.08   & 3 & 7 & 3 & 9  &  9.74 \\
2 & 3 & 4 & 5 &  7.11   & 3 & 7 & 3 &10  &  19.04\\
2 & 3 & 4 & 6 &  7.31   & 3 & 7 & 4 & 8  &  6.85 \\
2 & 3 & 4 & 7 &  7.45   & 3 & 7 & 4 & 9  &  6.88 \\
2 & 3 & 4 & 8 &  8.11   & 3 & 7 & 4 & 10 &  6.54  \\
2 & 3 & 5 & 5 &  7.10   & 3 & 7 & 4 & 11 &  8.35  \\
2 & 3 & 5 & 6 &  7.08   & 3 & 7 & 4 & 12 &  11.37  \\
2 & 3 & 5 & 7 &  7.22   & 3 & 7 & 4 & 13 &  22.84 \\
2 & 3 & 5 & 8 &  7.17   & 3 & 7 & 4 & 14 &  41.45 \\
2 & 3 & 5 & 9 &  7.37   & 3 & 7 & 4 & 15 &  69.26 \\
2 & 3 & 5 & 10 & 7.76   & 3 & 7 & 4 & 16 &  134.47 \\
2 & 3 & 5 & 11 & 8.87   & 3 & 7 & 4 & 17 &  224.97  \\
2 & 3 & 6 &12  & 7.84   & 3 & 7 & 4 & 18 &  382.00\\
2 & 3 & 6 &13  & 7.97   & 3 & 7 & 5 & 19 &  8.35 \\
2 & 3 & 6 & 14 & 8.50   & 3 & 7 & 5 & 20 &  8.45 \\
2 & 3 & 6 & 15 & 9.26   & 3 & 7 & 5 & 21 &  8.65 \\
2 & 3 & 6 & 16 &12.33   & 3 & 7 & 5 & 22 &  8.65  \\
2 & 3 & 7 & 17 & 8.27   & 3 & 7 & 5 & 23 &  8.80 \\
2 & 3 & 7 & 18 &12.11   & 3 & 7 & 5 & 24 &  9.08  \\
2 & 3 & 7 & 19 &34.24   & 3 & 7 & 5 & 25 &  8.96   \\
2 & 3 & 7 & 20 &722.98  & 3 & 7 & 6 & 26 &  9.10   \\
\bottomrule
\end{tabular}
\end{table}

%%%%%%%%%%%%%%%%%%%%%%%%%%%%%%
%\begin{table}[htb]
%\centering
%\caption{decompositions on Segre varieties}
% \label{table:decompsegre}
%\begin{tabular}{llll|llll|llll} \hline
%$k$  & $n$ & $d$ & time & $k$  & $n$ & $d$ & time & $k$  & $n$ & $d$ & time   \\ \hline
%2 & 3 & 3 &   7.07  & 3 & 7 & 3 &   7.45 & 4 & 15 & 3 &   \\ \hline
%2 & 3 & 4 &   8.11  & 3 & 7 & 4 &   6.54 & 4 & 15 & 4 &    \\ \hline
%2 & 3 & 5 &   8.87  & 3 & 7 & 5 & 224.97 & 4 & 15 & 5 &    \\ \hline
%2 & 3 & 6 &   12.33 & 3 & 7 & 6 &   8.65 & 4 & 15 & 6 &    \\ \hline
%2 & 3 & 7 &   8.27  & 3 & 7 & 7 &   8.80 & 4 & 15 & 7 &    \\ \hline
%2 & 3 & 8 &  722.98 & 3 & 7 & 8 &   9.10 & 4 & 15 & 8 &     \\ \hline
%\end{tabular}
%\end{table}

\end{example}

\section{Conclusion}
In this paper, we discuss how to compute symmetric $X$-decompositions
of symmetric tensors on a given variety $X$.
The tool of generating polynomial is used to do the computation.
Based on that, give an algorithm for computing symmetric $X$-decompositions.
Various examples are given to demonstrate the correctness
and efficiency of the proposed method.

%\subsection*{Acknowledgments}
%%The authors thank the anonymous reviewer for his careful reading and numerous helpful suggestions that greatly improved this article.

%KY's work is supported by NSFC no.~11688101, NSFC no.~11801548, National Key R\&D Program of China Grant no.~2018YFA0306702, the Hundred Talents Program of the Chinese Academy of Sciences as well as the recruitment program for young professionals of China.

%
%\bibliography{mybib}

\begin{thebibliography}{999}

\bibitem{abo2013dimensions}
H.~Abo and M.~Brambilla.
\newblock On the dimensions of secant varieties of segre-veronese varieties.
\newblock {\em Annali di Matematica Pura ed Applicata}, 192(1):61--92, 2013.

%\bibitem{acar2007multiway}
%Evrim Acar, Canan Aykut-Bingol, Haluk Bingol, Rasmus Bro, and B{\"u}lent Yener.
%\newblock Multiway analysis of epilepsy tensors.
%\newblock {\em Bioinformatics}, 23(13):i10--i18, 2007.



\bibitem{AlxHirs95}
J.~Alexander and A.~Hirschowitz.
\newblock Polynomial interpolation in several variables.
\newblock {\em Journal of Algebraic Geometry},
4(1995), pp. 201-22.


%\bibitem{badeau2008fast}
%Roland Badeau and R{\'e}my Boyer.
%\newblock Fast multilinear singular value decomposition for structured tensors.
%\newblock {\em SIAM Journal on Matrix Analysis and Applications},
%  30(3):1008--1021, 2008.

\bibitem{BalBer12}
E.~Balllico and A.~Bernardi.
Decomposition of homogeneous polynomials with low rank.
{\it Math. Z.} \, 271, \, 1141-1149, \, 2012.


%\bibitem{beckmann2005tensorial}
%Christian~F Beckmann and Stephen~M Smith.
%\newblock Tensorial extensions of independent component analysis for
%  multisubject fmri analysis.
%\newblock {\em Neuroimage}, 25(1):294--311, 2005.




\bibitem{BerGimIda11}
A.~Bernardi, A.~Gimigliano and M.~Id\`{a}.
Computing symmetric rank for symmetric tensors.
{\it Journal of Symbolic Computation} \, 46, (2011), 34-53.


%\bibitem{bernardi2013general}
%Alessandra Bernardi, Jerome Brachat, Pierre Comon, and Bernard Mourrain.
%\newblock General tensor decomposition, moment matrices and applications.
%\newblock {\em Journal of Symbolic Computation}, 52:51--71, 2013.

%\bibitem{beylkin2002numerical}
%Gregory Beylkin and Martin~J Mohlenkamp.
%\newblock Numerical operator calculus in higher dimensions.
%\newblock {\em Proceedings of the National Academy of Sciences},
%  99(16):10246--10251, 2002.

%\bibitem{beylkin2005algorithms}
%Gregory Beylkin and Martin~J Mohlenkamp.
%\newblock Algorithms for numerical analysis in high dimensions.
%\newblock {\em SIAM Journal on Scientific Computing}, 26(6):2133--2159, 2005.

%\bibitem{bini2007role}
%D~Bini.
%\newblock The role of tensor rank in the complexity analysis of bilinear forms.
%\newblock {\em Presentation at ICIAM07, Z{\"u}rich, Switzerland}, 2007.

\bibitem{BRACHAT20101851}
J.~Brachat, P.~Comon, B.~Mourrain, and E.~Tsigaridas.
\newblock Symmetric tensor decomposition.
\newblock {\em Linear Algebra and its Applications},
433(11):1851--1872, 2010.

%\bibitem{brambilla2008alexander}
%Maria~Chiara Brambilla and Giorgio Ottaviani.
%\newblock On the alexander--hirschowitz theorem.
%\newblock {\em Journal of Pure and Applied Algebra}, 212(5):1229--1251, 2008.

%\bibitem{buczynska2014secant}
%Weronika Buczy{\'n}ska and Jaros{\l}aw Buczy{\'n}ski.
%\newblock Secant varieties to high degree veronese reembeddings, catalecticant
%  matrices and smoothable gorenstein schemes.
%\newblock {\em Journal of Algebraic Geometry}, 23(1):63--90, 2014.




%
%\bibitem{cardoso1991super}
%J-F Cardoso.
%\newblock Super-symmetric decomposition of the fourth-order cumulant tensor.
%  blind identification of more sources than sensors.
%\newblock In {\em Acoustics, Speech, and Signal Processing, 1991. ICASSP-91.,
%  1991 International Conference on}, pages 3109--3112. IEEE, 1991.
%


%\bibitem{Cardoso1996Independent}
%J.~F. Cardoso and Pierre Comon.
%\newblock Independent component analysis, a survey of some algebraic methods.
%\newblock {\em Proceedings - IEEE International Symposium on Circuits and
%  Systems}, 2:93--96 vol.2, 1996.

%\bibitem{Carroll1970Analysis}
%J.~Douglas Carroll and Jih~Jie Chang.
%\newblock Analysis of individual differences in multidimensional scaling via an
%  n-way generalization of “eckart-young” decomposition.
%\newblock {\em Psychometrika}, 35(3):283--319, 1970.

%\bibitem{catalisano2005higher}
%MV~Catalisano, AV~Geramita, and A~Gimigliano.
%\newblock Higher secant varieties of segre-veronese varieties.
%\newblock {\em Projective varieties with unexpected properties}, pages 81--107,
%  2005.



\bibitem{ChOtVan15}
L.~Chiantini, G.~Ottaviani, and N.~Vannieuwenhoven.
On generic identifiability of symmetric tensors of subgeneric rank.
{\it Trans. Amer. Math. Soc.},
369 (2017), 4021-4042.




\bibitem{chiantini2017polynomials}
L.~Chiantini, J.~Hauenstein, C.~Ikenmeyer, J.~Landsberg,
and G.~Ottaviani.
\newblock Polynomials and the exponent of matrix multiplication.
\newblock {\em arXiv preprint arXiv:1706.05074}, 2017.



\bibitem{ComSei11}
G.~Comas and M.~Seiguer.
On the rank of a binary form.
{\it Foundations of Computational Mathematics},
Vol.~11, No.~1, pp. 65-78, 2011.



\bibitem{CGLM08}
P.~Comon, G.~Golub, L.-H.~Lim, and B.~Mourrain.
\newblock Symmetric tensors and symmetric tensor rank.
\newblock {\em SIAM Journal on Matrix Analysis and Applications},
30(3):1254--1279, 2008.


%
%\bibitem{comon1996decomposition}
%Pierre Comon and Bernard Mourrain.
%\newblock Decomposition of quantics in sums of powers of linear forms.
%\newblock {\em Signal Processing}, 53(2-3):93--107, 1996.
%


\bibitem{Corless1997A}
R.~Corless, P.~Gianni, and B.~Trager.
\newblock A reordered schur factorization method for zero-dimensional
  polynomial systems with multiple roots.
\newblock In {\em International Symposium on Symbolic and Algebraic
  Computation}, pp. 133--140, 1997.



\bibitem{CLO}
D.~Cox, J.~Little and D.~O’Shea.
{\em Ideals, varieties, and algorithms: an introduction to
computational algebraic geometry and commutative algebra},
Springer, 2007.



%\bibitem{de2007fourth}
%Lieven De~Lathauwer, Josphine Castaing, and Jean-Franois Cardoso.
%\newblock Fourth-order cumulant-based blind identification of underdetermined
%  mixtures.
%\newblock {\em IEEE Transactions on Signal Processing}, 55(6):2965--2973, 2007.


\bibitem{de1998matrix}
L.~De~Lathauwer and B.~De~Moor.
From matrix to tensor: Multilinear algebra and signal processing.
In {\em Institute of Mathematics and Its Applications Conference Series},
vol.~67, pp. 1--16, 1998.


%\bibitem{ding2015fast}
%Weiyang Ding, Liqun Qi, and Yimin Wei.
%\newblock Fast hankel tensor--vector product and its application to exponential
%  data fitting.
%\newblock {\em Numerical Linear Algebra with Applications}, 22(5):814--832,
%  2015.


\bibitem{ECW1992}
D.~Eisenbud, H.~Craig and V.~Wolmer.
\newblock{\em Direct methods for primary decomposition.}.
\newblock{\em Inventiones mathematicae}, 110.1 (1992): 207-235.

\bibitem{Kehrein2005}
A.~Kehrein, M.~Kreuzer and L.~Robbiano.
\newblock{An algebraist’s view on border bases. In: A.~Dickenstein et al. (eds) \em Solving Polynomial Equations: Foundations, Algorithms, and Applications.}
\newblock Algorithms and Computation in Mathematics, vol.~14, pp.~169--202.
\newblock Springer-Verlag, New York-Heidelberg, 2005.

\bibitem{FGT2002}
E.~Fortuna, P.~Gianni and B. Trager.
\newblock{\em Derivations and radicals of polynomial ideals over fields of arbitrary characteristic.}
\newblock{\em Journal of Symbolic Computation}, 33.5 (2002): 609-625.


\bibitem{GalMel}
F.~Galuppi and M.~Mella.
Identifiability of homogeneous polynomials and Cremona Transformations.
{\it Preprint}, 2016.
\url{arXiv:1606.06895v2 [math.AG]}


\bibitem{GBG1988}
T.~Gianni, T.~Barry, and Z.~Gail.
\newblock{\em Gr\"{o}bner bases and primary decomposition of polynomial ideals.}.
\newblock{\em Journal of Symbolic Computation}, 6.2-3 (1988): 149-167.




%\bibitem{goyal2014fourier}
%Navin Goyal, Santosh Vempala, and Ying Xiao.
%\newblock Fourier pca and robust tensor decomposition.
%\newblock In {\em Proceedings of the forty-sixth annual ACM symposium on Theory
%  of computing}, pages 584--593. ACM, 2014.

%\bibitem{M2}
%Daniel~R. Grayson and Michael~E. Stillman.
%\newblock Macaulay2, a software system for research in algebraic geometry.
%\newblock Available at \url{http://www.math.uiuc.edu/Macaulay2/}.

\bibitem{grone1977decomposable}
R.~Grone.
\newblock Decomposable tensors as a quadratic variety.
\newblock {\em Proceedings of the American Mathematical Society},
  64(2):227--230, 1977.


\bibitem{GN2011}
B.~Gross and N.~Wallach.
\newblock On the Hilbert polynomials and Hilbert series of homogeneous projective varieties.
\newblock{\em Arithmetic geometry and automorphic forms} 19 (2011): 253-263.


%\bibitem{haardt2008higher}
%Martin Haardt, Florian Roemer, and Giovanni Del~Galdo.
%\newblock Higher-order svd-based subspace estimation to improve the parameter
%  estimation accuracy in multidimensional harmonic retrieval problems.
%\newblock {\em IEEE Transactions on Signal Processing}, 56(7):3198--3213, 2008.

%\bibitem{hackbusch2007tensor}
%Wolfgang Hackbusch and Boris~N Khoromskij.
%\newblock Tensor-product approximation to operators and functions in high
%  dimensions.
%\newblock {\em Journal of Complexity}, 23(4-6):697--714, 2007.

%\bibitem{hackbusch2005hierarchical}
%Wolfgang Hackbusch, Boris~N Khoromskij, and Eugene~E Tyrtyshnikov.
%\newblock Hierarchical kronecker tensor-product approximations.
%\newblock {\em Journal of Numerical Mathematics jnma}, 13(2):119--156, 2005.

\bibitem{Harris1992}
J.~Harris.
\newblock {\em Algebraic geometry},
\newblock Graduate Texts in Mathematics, vol.~133.
\newblock Springer-Verlag, New York, 1992.


\bibitem{Hartshorne1977}
R.~Hartshorne.
\newblock {\em Algebraic geometry}.
\newblock Graduate Texts in Mathematics, vol.~52.
\newblock Springer-Verlag, New York-Heidelberg, 1977.


%\bibitem{Hitchcock1927The}
%Frank~L. Hitchcock.
%\newblock The expression of a tensor or a polyadic as a sum of products.
%\newblock {\em Studies in Applied Mathematics}, 6(1-4):164–189, 1927.

%\bibitem{Hitchcock1927Multiple}
%Frank~L Hitchcock.
%\newblock Multiple invariants and generalized rank of a p‐way matrix or
%  tensor.
%\newblock {\em Journal of Mathematics \& Physics}, 7(1-4), 1927.


\bibitem{KL2005}
M.~Kreuzer and L.~Robbiano.
\newblock{\em Computational commutative algebra~2}.
\newblock Springer Science \& Business Media, 2005.



\bibitem{IarKan99}
A.~Iarrobino and V.~Kanev.
{\it Power Sums, Gorenstein algebras, and determinantal varieties.}
Lecture Notes in Mathematics \#1721,
Springer, 1999.


%\bibitem{ibraghimov2002application}
%Ilghiz Ibraghimov.
%\newblock Application of the three-way decomposition for matrix compression.
%\newblock {\em Numerical linear algebra with applications}, 9(6-7):551--565,
%  2002.

%\bibitem{jain2014provable}
%Prateek Jain and Sewoong Oh.
%\newblock Provable tensor factorization with missing data.
%\newblock In {\em Advances in Neural Information Processing Systems}, pages
%  1431--1439, 2014.

%\bibitem{knuth1997art}
%Donald~Ervin Knuth.
%\newblock {\em The art of computer programming}, volume~2.
%\newblock Pearson Education, 1997.



\bibitem{KolBad09}
T.~Kolda and B.~Bader.
Tensor decompositions and applications.
{\em SIAM Rev.} \, vol.~51,  no. 3, 455-500, 2009.


%
%\bibitem{kolda2015numerical}
%Tamara~G Kolda.
%\newblock Numerical optimization for symmetric tensor decomposition.
%\newblock {\em Mathematical Programming}, 151(1):225--248, 2015.
%

%
%\bibitem{kruskal1977three}
%Joseph~B Kruskal.
%\newblock Three-way arrays: rank and uniqueness of trilinear decompositions,
%  with application to arithmetic complexity and statistics.
%\newblock {\em Linear algebra and its applications}, 18(2):95--138, 1977.

%\bibitem{laface2013secant}
%Antonio Laface and Elisa Postinghel.
%\newblock Secant varieties of segre-veronese embeddings of $
%  $($\backslash$mathbb $\{$P$\}$\^{} 1)\^{} r $ $.
%\newblock {\em Mathematische Annalen}, 356(4):1455--1470, 2013.

\bibitem{LP2013}
A.~Laface and E.~Postinghel.
\newblock Secant varieties of segre-veronese embeddings of
  $(\mathbb{P}^1)^{\times r} $.
\newblock {\em Mathematische Annalen}, 356(4):1455--1470, 2013.


\bibitem{landsberg2006border}
J.~Landsberg.
\newblock The border rank of the multiplication of $2\times 2$
matrices is seven.
\newblock {\em Journal of the American Mathematical Society},
19(2):447--459, 2006.

%\bibitem{landsberg2008geometry}
%J~Landsberg.
%\newblock Geometry and the complexity of matrix multiplication.
%\newblock {\em Bulletin of the American Mathematical Society}, 45(2):247--284,
%  2008.



\bibitem{Land12}
J.~Landsberg.
\newblock {\em Tensors: geometry and applications}.
\newblock Graduate Studies in Mathematics, 128,
AMS, Providence, RI, 2012.




\bibitem{landsberg2013equations}
J.~Landsberg and G.~Ottaviani.
\newblock Equations for secant varieties of veronese and other varieties.
\newblock {\em Annali di Matematica Pura ed Applicata},
192(4):569--606, 2013.



\bibitem{lim2010multiarray}
L.-H.~Lim and P.~Comon.
\newblock Multiarray signal processing:
Tensor decomposition meets compressed sensing.
\newblock {\em Comptes Rendus Mecanique},
338(6):311--320, 2010.



\bibitem{Lim13}
L.-H.~Lim.
Tensors and hypermatrices, in: L. Hogben (Ed.),
{\it Handbook of linear algebra}, 2nd Ed.,
CRC Press, Boca Raton, FL, 2013.


%\bibitem{liu2002almost}
%Xiangqian Liu and Nicholas~D Sidiropoulos.
%\newblock Almost sure identifiability of constant modulus multidimensional
%  harmonic retrieval.
%\newblock {\em IEEE Transactions on Signal Processing}, 50(9):2366--2368, 2002.

%\bibitem{Mccullagh1987Tensor}
%Peter Mccullagh.
%\newblock Tensor methods in statistics.
%\newblock {\em Journal of the American Statistical Association}, 83(404):1218,
%  1987.

%\bibitem{miwakeichi2004decomposing}
%Fumikazu Miwakeichi, Eduardo Mart{\i}nez-Montes, Pedro~A Vald{\'e}s-Sosa,
%  Nobuaki Nishiyama, Hiroaki Mizuhara, and Yoko Yamaguchi.
%\newblock Decomposing eeg data into space--time--frequency components using
%  parallel factor analysis.
%\newblock {\em NeuroImage}, 22(3):1035--1045, 2004.

%\bibitem{mori2006principles}
%Susumu Mori and Jiangyang Zhang.
%\newblock Principles of diffusion tensor imaging and its applications to basic
%  neuroscience research.
%\newblock {\em Neuron}, 51(5):527--539, 2006.

%\bibitem{morup2007erpwavelab}
%Morten M{\o}rup, Lars~Kai Hansen, and Sidse~M Arnfred.
%\newblock Erpwavelab: A toolbox for multi-channel analysis of time--frequency
%  transformed event related potentials.
%\newblock {\em Journal of neuroscience methods}, 161(2):361--368, 2007.



\bibitem{Mourrain1999}
B.~Mourrain,
\newblock{A new criterion for normal form algorithms}.
\newblock{\em International Symposium on Applied Algebra,
Algebraic Algorithms, and Error-Correcting Codes},
\newblock Springer, Berlin, Heidelberg, 1999.



\bibitem{Mum95}
D.~Mumford.
\newblock Algebraic Geometry I: Complex Projective Varieties.
\newblock Springer Verlag, Berlin, 1995.


%\bibitem{muti2005multidimensional}
%Damien Muti and Salah Bourennane.
%\newblock Multidimensional filtering based on a tensor approach.
%\newblock {\em Signal Processing}, 85(12):2338--2353, 2005.

\bibitem{nie2017generating}
J.~Nie.
\newblock Generating polynomials and symmetric tensor decompositions.
\newblock {\em Foundations of Computational Mathematics}, 17(2):423--465, 2017.

%\bibitem{NL2014}
%Jiawang Nie and Li~Wang.
%\newblock Semidefinite relaxations for best rank-1 tensor approximations.
%\newblock {\em SIAM J. Matrix Anal. Appl.}, 35(3):1155--1179, 2014.
%
%\bibitem{nie2014semidefinite}
%J.~Nie and L.~Wang.
%\newblock Semidefinite relaxations for best rank-1 tensor approximations.
%\newblock {\em SIAM J. Matrix Anal. Appl.},
%  35(3):1155--1179, 2014.
%

\bibitem{LRSTA}
J.~Nie.
Low rank symmetric tensor approximations,
{\em SIAM J. Matrix Anal. Appl.}, 38 (no.~4), pp. 1517--1540, 2017.


\bibitem{nie2017hankel}
J.~Nie and K.~Ye.
\newblock Hankel tensor decompositions and ranks.
{\em SIAM J. Matrix Anal. Appl.}, 40 (no.~2), 486--516, 2019.


\bibitem{nion2010tensor}
D.~Nion and N.~Sidiropoulos.
\newblock Tensor algebra and multidimensional harmonic retrieval in signal
  processing for mimo radar.
\newblock {\em IEEE Transactions on Signal Processing},
58(11):5693--5705, 2010.


\bibitem{oeding2013eigenvectors}
L.~Oeding and G.~Ottaviani.
\newblock Eigenvectors of tensors and algorithms for waring decomposition.
\newblock {\em Journal of Symbolic Computation}, 54:9--35, 2013.

%\bibitem{oeding2014tangential}
%Luke Oeding and Claudiu Raicu.
%\newblock Tangential varieties of segre--veronese varieties.
%\newblock {\em Collectanea Mathematica}, 65(3):303--330, 2014.

\bibitem{papy2005exponential}
J.~Papy, L.~De~Lathauwer, and S.~~Huffel.
\newblock Exponential data fitting using multilinear algebra: the
  single-channel and multi-channel case.
\newblock {\em Numerical linear algebra with applications},
12(8):809--826, 2005.

%\bibitem{pesavento2004multidimensional}
%Marius Pesavento, Christoph~F Mecklenbr{\"a}uker, and Johann~F B{\"o}hme.
%\newblock Multidimensional rank reduction estimator for parametric mimo channel
%  models.
%\newblock {\em EURASIP Journal on Applied Signal Processing}, 2004:1354--1363,
%  2004.

%\bibitem{qi2013hankel}
%Liqun Qi.
%\newblock Hankel tensors: Associated hankel matrices and vandermonde
%  decomposition.
%\newblock {\em arXiv preprint arXiv:1310.5470}, 2013.

%\bibitem{qi2015hankel}
%Liqun Qi.
%\newblock {Hankel Tensors: Associated Hankel Matrices and Vandermonde
%  Decomposition}.
%\newblock {\em arXiv preprint arXiv:1310.5470}, 2013.

\bibitem{qi2015hankel}
L.~Qi.
\newblock Hankel tensors: Associated hankel matrices and vandermonde
  decomposition.
\newblock {\em Communications in Mathematical Sciences},
13(1):113--125, 2015.


\bibitem{raicu2012secant}
C.~Raicu.
\newblock Secant varieties of segre--veronese varieties.
\newblock {\em Algebra \& Number Theory}, 6(8):1817--1868, 2012.


\bibitem{RS2000}
K.~Ranestad,, and F.~O.~Schreyer.
\newblock{\em Varieties of sums of powers.}
\newblock{\em Journal f\"{u}r die reine und angewandte Mathematik}, (2000): 147-182.



\bibitem{sam2017ideals}
S.~Sam.
\newblock Ideals of bounded rank symmetric tensors are generated in bounded degree.
\newblock {\em Inventiones mathematicae}, 207(1):1--21, 2017.


\bibitem{sam2017syzygies}
S.~Sam.
\newblock Syzygies of bounded rank symmetric tensors are generated in bounded degree.
\newblock {\em Mathematische Annalen}, 368(3-4):1095--1108, 2017.

%
%\bibitem{shashua2006multi}
%Amnon Shashua, Ron Zass, and Tamir Hazan.
%\newblock Multi-way clustering using super-symmetric non-negative tensor
%  factorization.
%\newblock In {\em European conference on computer vision}, pages 595--608.
%  Springer, 2006.
%

%\bibitem{sidiropoulos2001generalizing}
%Nicholas~D Sidiropoulos.
%\newblock Generalizing caratheodory's uniqueness of harmonic parameterization
%  to n dimensions.
%\newblock {\em IEEE Transactions on Information Theory}, 47(4):1687--1690,
%  2001.

%\bibitem{sidiropoulos2000parallel}
%Nicholas~D Sidiropoulos, Rasmus Bro, and Georgios~B Giannakis.
%\newblock Parallel factor analysis in sensor array processing.
%\newblock {\em IEEE transactions on Signal Processing}, 48(8):2377--2388, 2000.

\bibitem{signoretto2011kernel}
M.~Signoretto, L.~De~Lathauwer, and J.~Suykens.
\newblock A kernel-based framework to tensorial data analysis.
\newblock {\em Neural networks}, 24(8):861--874, 2011.


\bibitem{Stetter2004}
H.~Stetter.
\newblock {\em Numerical Polynomial Algebra}.
\newblock SIAM, Philadelphia, 2004.


\bibitem{strassen1969gaussian}
V.~Strassen.
\newblock Gaussian elimination is not optimal.
\newblock {\em Numerische mathematik}, 13(4):354--356, 1969.


%
%\bibitem{Sturmfels2002Solving}
%Bernd Sturmfels.
%\newblock {\em Solving systems of polynomial equations}.
%\newblock American Mathematical Society,, 2002.
%

\bibitem{sun2012accurate}
W.~Sun and H.~So.
\newblock Accurate and computationally efficient tensor-based subspace approach
  for multidimensional harmonic retrieval.
\newblock {\em IEEE Transactions on Signal Processing},
60(10):5077--5088, 2012.


\bibitem{Teitler2015}
Z.~Teitler
\newblock{\em Sufficient conditions for Strassen’s additivity conjecture.}
\newblock{\em Illinois Journal of Mathematics}, 59.4 (2015): 1071-1085.


\bibitem{trickett2013interpolation}
S.~Trickett, L.~Burroughs, A.~Milton, et~al.
\newblock Interpolation using hankel tensor completion.
\newblock In {\em 2013 SEG Annual Meeting}.
Society of Exploration Geophysicists, 2013.


%\bibitem{tucker1966some}
%Ledyard~R Tucker.
%\newblock Some mathematical notes on three-mode factor analysis.
%\newblock {\em Psychometrika}, 31(3):279--311, 1966.

\bibitem{uemura2012symmetric}
W.~Uemura and O.~Sugino.
\newblock Symmetric tensor decomposition description of fermionic many-body wave functions.
\newblock {\em Physical Review Letters}, 109(25):253001, 2012.

%
%\bibitem{wang2015fast}
%Yining Wang, Hsiao-Yu Tung, Alexander~J Smola, and Anima Anandkumar.
%\newblock Fast and guaranteed tensor decomposition via sketching.
%\newblock In {\em Advances in Neural Information Processing Systems}, pages
%  991--999, 2015.
%


\bibitem{YL2016}
K.~Ye and L.-H.~Lim.
\newblock Fast structured matrix computations: tensor rank and cohn--umans
  method.
\newblock {\em Foundations of Computational Mathematics}, 18(1) pp. 45--95, 2018.

\bibitem{ye2018tensor}
K.~Ye and L.-H.~Lim.
\newblock Tensor network ranks.
\newblock {\em arXiv preprint arXiv:1801.02662}, 2018.


\end{thebibliography}
\bibliographystyle{plain}
\def\cdprime{$''$} \def\cprime{$'$}
\def\cprime{$'$} \def\cprime{$'$}
\def\Dbar{\leavevmode\lower.6ex\hbox to 0pt{\hskip-.23ex \accent"16\hss}D}
\def\cprime{$'$}

\end{document}